\newtheorem{theorem}{Theorem}[section]
\newtheorem{proposition}[theorem]{Proposition}
\newtheorem{corollary}[theorem]{Corollary}
\newtheorem{lemma}[theorem]{Lemma}
\theoremstyle{definition}
\newtheorem{definition}[theorem]{Definition}
\newtheorem{remark}[theorem]{Remark}
\newtheorem{example}[theorem]{Example}
\newcommand{\kin}{\mathcal{O}_{in}}
\newcommand{\kout}{\mathcal{O}_{out}}
\def\RR{\ensuremath{\mathbb{R}}}
\def\rank{\ensuremath{\textup{rank}}}
\def\max{\ensuremath{\textup{max}}}
\def\min{\ensuremath{\textup{min}}}
\def\SOC{\ensuremath{\textup{SOC}}}
\def\PSD{\ensuremath{\mathcal{S}_{+}}}
\def\Inn{\ensuremath{\textup{Inn}}}
\def\Out{\ensuremath{\textup{Out}}}
\title
[Approximate Cone Factorizations and Lifts of Polytopes]
{Approximate Cone Factorizations \\ and Lifts of Polytopes}
\author{Jo{\~a}o Gouveia}
\address{CMUC, Department of Mathematics,
  University of Coimbra, 3001-454 Coimbra, Portugal}
\email{jgouveia@mat.uc.pt} 
\author{Pablo A. Parrilo}
\address{Department of Electrical Engineering and Computer Science,
  Laboratory for Information and Decision Systems, Massachusetts
  Institute of Technology, 77 Massachusetts Avenue, Cambridge, MA
  02139, USA} 
\email{parrilo@mit.edu} 
\author{Rekha R. Thomas}
\address{Department of Mathematics, University of Washington, Box
  354350, Seattle, WA 98195, USA} \email{rrthomas@uw.edu}
\thanks{Gouveia was supported by the Centre for Mathematics at the
  University of Coimbra and Funda\c{c}\~ao para a Ci\^encia e a
  Tecnologia, through the European program COMPETE/FEDER. Parrilo was
  supported by AFOSR FA9550-11-1-0305, and Thomas by the U.S. National
  Science Foundation grant DMS-1115293.  }
\date{\today}
\begin{document}
\maketitle

\begin{abstract} 
In this paper we show how to construct inner and outer convex
approximations of a polytope from an approximate cone factorization
of its slack matrix.  This provides a robust generalization of the
famous result of Yannakakis that polyhedral lifts of a polytope are
controlled by (exact) nonnegative factorizations of its slack matrix.
Our approximations behave well under polarity and have efficient
representations using second order cones. We establish a direct
relationship between the quality of the factorization and the quality
of the approximations, and our results extend to generalized slack
matrices that arise from a polytope contained in a polyhedron.
\end{abstract}

\section{Introduction}
A well-known idea in optimization to represent a complicated convex
set $C \subset \RR^n$ is to describe it as the linear image of a
simpler convex set in a higher dimensional space, called a {\em lift}
or {\em extended formulation} of $C$ . The standard way to express
such a lift is as an affine slice of some closed convex cone $K$,
called a $K$-{\em lift} of $C$, and the usual examples of $K$ are
nonnegative orthants $\RR^m_+$ and the cones of real symmetric
positive semidefinite matrices $\PSD^m$.  More precisely, $C$ has a $K$-lift,
where $K \subset \RR^m$, if there exists an affine subspace $L \subset
\RR^m$ and a linear map $\pi \,:\, \RR^m \rightarrow \RR^n$ such that
$C = \pi(K \cap L)$.

Given a nonnegative matrix 
$M \in \RR^{p \times q}_+$ and a closed convex cone 
$K \subset \RR^m$ with dual cone $K^\ast \subset (\RR^m)^\ast$, a 
$K$-factorization of $M$ is a collection of elements 
$a_1, \ldots, a_p \in K^\ast$ and $b_1, \ldots, b_q \in K$ such that 
$M_{ij} = \langle a_i, b_j \rangle$ for all $i,j$.  In particular, a 
$\RR^m_+$-factorization of $M$, also called a {\em nonnegative factorization} 
of $M$ of size $m$, is typically expressed as $M = A^TB$ where 
$A$ has columns $a_1, \ldots, a_p \in (\RR^m)^\ast_+$ and $B$ 
has columns $b_1, \ldots, b_q \in \RR^m_+$. In \cite{Yannakakis}, Yannakakis
laid the foundations of polyhedral lifts of polytopes by showing the following.

\begin{theorem} \label{thm:Yannakakis} \cite{Yannakakis}
A polytope $P \subset \RR^n$ has a $\RR^m_+$-lift if and only if the
{\em slack matrix} of $P$ has a $\RR^m_+$-factorization. 
\end{theorem}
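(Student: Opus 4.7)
The plan is to translate the data of a lift directly into the data of a factorization, and conversely. Write $P = \{x \in \RR^n : \langle c_i, x\rangle \leq d_i, \, i = 1, \ldots, p\}$ with vertices $v_1, \ldots, v_q$, so that $S_{ij} = d_i - \langle c_i, v_j\rangle$.

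For the direction $(\Rightarrow)$, suppose $P = \pi(\RR^m_+ \cap L)$ with $L$ affine and $\pi$ linear. I would first choose, for each vertex $v_j$, a preimage $b_j \in \RR^m_+ \cap L$ with $\pi(b_j) = v_j$; these will form the ``$B$'' side of the factorization. For each facet, the affine function $f_i(y) := d_i - \langle c_i, \pi(y)\rangle$ is nonnegative on $\RR^m_+ \cap L$, since $\pi(\RR^m_+ \cap L) = P$. The key step is an affine Farkas argument: any affine function that is nonnegative on an affine slice of $\RR^m_+$ agrees on that slice with some $\langle a, \cdot \rangle$ for $a \in \RR^m_+$. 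This yields $a_i \in \RR^m_+$ with $f_i(y) = \langle a_i, y\rangle$ on $L$, and hence $S_{ij} = f_i(b_j) = \langle a_i, b_j\rangle$.

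For the direction $(\Leftarrow)$, given $S_{ij} = \langle a_i, b_j\rangle$ with $a_i, b_j \in \RR^m_+$, let $A$ have columns $a_1, \ldots, a_p$ and let $\Phi(x) := d - Cx$ denote the slack map from $\RR^n$ to $\RR^p$, where $C$ is the matrix of facet normals and $d = (d_1, \ldots, d_p)^T$. The factorization identity gives $A^T b_j = \Phi(v_j)$, which lies in the affine subspace $\Phi(\RR^n) \subseteq \RR^p$. This suggests defining $L := \{y \in \RR^m : A^T y \in \Phi(\RR^n)\}$ and $\pi(y) := \Phi^{-1}(A^T y)$, the latter being well-defined and affine once we assume $P$ is full-dimensional so that $\Phi$ is injective. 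Each $b_j$ lies in $\RR^m_+ \cap L$ with $\pi(b_j) = v_j$, so $P \subseteq \pi(\RR^m_+ \cap L)$ by convexity; conversely, for any $y \in \RR^m_+ \cap L$, entrywise nonnegativity of both $A$ and $y$ forces $\Phi(\pi(y)) = A^T y \geq 0$, whence $\pi(y) \in P$.

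The main obstacle I anticipate is the affine Farkas step in the forward direction: one must show that an affine function (with its nonzero constant term $d_i$) that is nonnegative on an affine slice of $\RR^m_+$ can be written on that slice as a nonnegative linear functional. This is standard but requires care; the cleanest route is to homogenize, apply the linear Farkas lemma to an appropriate cone in $\RR^{m+1}$, and then read off $a_i$ from the resulting certificate. Two minor bookkeeping issues arise in the converse: the $\pi$ I build is naturally affine rather than strictly linear, but this is equivalent under standard conventions (add a constant coordinate or translate the polytope); and the full-dimensionality hypothesis is free, since one can always restrict attention to the affine hull of $P$.
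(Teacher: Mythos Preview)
The paper does not give its own proof of this theorem; it is stated as a cited result of Yannakakis. The only place the paper touches the argument is in the introduction, where it records the explicit lift coming from a factorization,
\[
P=\left\{ x \in \RR^n \, : \, \exists\, y \in \RR^{m}_+ \textup{ s.t. }  H^Tx + A^T y = {\mathbbm 1} \right\},
\]
and verifies (in the discussion following Proposition~\ref{prop:inclusions}) that this set equals $P$ when $S=A^TB$ is an exact factorization. Your $(\Leftarrow)$ construction is exactly this: your map $\pi(y)=\Phi^{-1}(A^Ty)$ recovers the unique $x$ with ${\mathbbm 1}-H^Tx=A^Ty$, so $\pi(\RR^m_+\cap L)$ is the set above. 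The forward direction is not discussed in the paper at all.

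Your outline is the standard Yannakakis argument and is essentially correct. One small point worth making explicit in the Farkas step: LP duality applied to $\min\{f_i(y):y\in\RR^m_+\cap L\}$ a priori only yields $a_i\in\RR^m_+$ with $f_i(y)=\langle a_i,y\rangle+c_i$ on $L$ for some constant $c_i\geq 0$. To conclude $c_i=0$ you use that each facet inequality is \emph{tight} at some vertex $v_j$, so $f_i$ actually attains the value $0$ on $\RR^m_+\cap L$ (at the chosen preimage $b_j$), forcing the residual constant to vanish. Your homogenization route handles this too, but the same observation is what makes it work.
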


This theorem was extended in 
\cite{GPT2012} from $\RR^m_+$-lifts of polytopes to $K$-lifts of convex 
sets $C \subset \RR^n$, where $K$ is any closed convex cone,  
via $K$-{\em factorizations} of the {\em slack operator} of $C$.

The above results rely on exact cone factorizations of the slack
matrix or operator of the given convex set, and do not offer any
suggestions for constructing lifts of the set in the absence of exact
factorizations. In many cases, one only has access to approximate
factorizations of the slack matrix, typically via numerical
algorithms. In this paper we show how to take an approximate
$K$-factorization of the slack matrix of a polytope and construct from
it an inner and outer convex approximation of the polytope. Our
approximations behave well under polarity and admit efficient
representations via second order cones.  Further, we show that the
quality of our approximations can be bounded by the error in the corresponding 
approximate factorization 
of the slack matrix.

Let $P := \{x \in \RR^n \,:\, H^T x \leq {\mathbbm 1}\}$ be a
full-dimensional polytope in $\RR^n$ with the origin in its interior,
and vertices $p_1, \ldots, p_v$.  We may assume without loss of
generality that each inequality $\langle h_i, x \rangle \leq 1$ in
$H^Tx \leq {\mathbbm 1}$ defines a facet of $P$. If $H$ has size $n
\times f$, then the {\em slack matrix} of $P$ is the $f \times v$
nonnegative matrix $S$ whose $(i,j)$-entry is $S_{ij} = 1 - \langle
h_i, p_j \rangle$, the {\em slack} of the $j$th vertex in the $i$th
inequality of $P$.  Given an $\RR^m_+$-factorization of $S$, i.e., two
nonnegative matrices $A$ and $B$ such that $S = A^TB$, an
$\RR^m_+$-lift of $P$ is obtained as
\[
P=\left\{ x \in \RR^n \, : \, \exists y \in \RR^{m}_+ \textup{ s.t. }  H^Tx + A^T y = {\mathbbm 1} \right\}.
\]
Notice that this lift is
highly non-robust, and small perturbations of $A$ make the right hand
side empty, since the linear system $H^Tx + A^T y = {\mathbbm 1}$ is in general
highly overdetermined. The same sensitivity holds for all
$K$-factorizations and lifts.  Hence, it becomes important to have a
more robust, yet still efficient, way of expressing $P$ (at least
approximately) from approximate $K$-factorizations of $S$. Also, the
quality of the approximations of $P$ and their lifts must reflect the
quality of the factorization, and specialize to the Yannakakis setting
when the factorization is exact. The results in this paper carry out
this program and contain several examples, special cases, and
connections to the recent literature.

\subsection{Organization of the paper} In Section~\ref{sec:approx} we
establish how an approximate $K$-factorization of the slack matrix of
a polytope $P \subset \RR^n$ yields a pair of inner and outer convex
approximations of $P$ which we denote as $\Inn_P(A)$ and $\Out_P(B)$
where $A$ and $B$ are the two ``factors'' in the approximate
$K$-factorization. These convex sets arise naturally from two simple
inner and outer second order cone approximations of the nonnegative
orthant. While the outer approximation is always closed, the inner 
approximation maybe open if $K$ is an arbitrary cone. However, we show 
that if the polar of $K$ is  ``nice'' \cite{Pataki1}, then the inner approximation will be closed.
All cones of interest to us in this paper such as nonnegative orthants, 
positive semidefinite cones, and second order cones are nice. Therefore, we will 
assume that our approximations are closed after a discussion of their closedness.

We prove that our approximations behave well under
polarity, in the sense that
\[
\Out_{P^{\circ}}(A) = (\Inn_{P}(A))^{\circ} \quad \textup{ and } \quad
\Inn_{P^{\circ}}(B) = (\Out_{P}(B))^{\circ}
\] 
where $P^\circ$ is the polar polytope of $P$. Given $P \subset \RR^n$
and $K \subset \RR^m$, our approximations admit efficient
representations via slices and projections of $K \times \SOC_{n+m+2}$ where
$\SOC_k$ is a second order cone of dimension $k$. We show that an
$\varepsilon$-error in the $K$-factorization makes $\frac{1}{1 +
  \varepsilon} P \subseteq \Inn_P(A)$ and $\Out_P(B) \subseteq (1+
\varepsilon)P$, thus establishing a simple link between the error in
the factorization and the gap between $P$ and its approximations.  In
the presence of an exact $K$-factorization of the slack matrix, our
results specialize to the Yannakakis setting.

In Section~\ref{sec:cases} we discuss two connections between our
approximations and well-known constructions in the literature. In the
first part we show that our inner approximation, $\Inn_P(A)$, always contains
the Dikin ellipsoid used in interior point methods. Next we examine
the closest rank one approximation of the slack matrix obtained via a
singular value decomposition and the approximations of the polytope
produced by it.

In Section~\ref{sec:twomatrices} we extend our results to the case of
generalized slack matrices that arise from a polytope contained in a
polyhedron.  We also show how an approximation of $P$ with a $K$-lift
produces an approximate $K$-factorization of the slack matrix of
$P$. It was shown in \cite{BraunFioriniPokuttaSteurer} that the max
clique problem does not admit polyhedral approximations with small
polyhedral lifts.  We show that this negative result continues to hold
even for the larger class of convex approximations considered in this
paper.

\section{From approximate factorizations to approximate lifts} 
\label{sec:approx}

In this section we show how to construct inner and outer
approximations of a polytope~$P$ from approximate $K$-factorizations
of the slack matrix of $P$, and establish the basic properties of
these approximations.

\subsection{$K$-factorizations and linear maps}
Let $P = \{ x \in \RR^n \,:\, H^T x \leq {\mathbbm 1} \}$ be a
full-dimensional polytope with the origin in its interior. The
vertices of the polytope are $p_1, \ldots, p_v$, and each inequality
$\langle h_i, x \rangle \leq 1$ for $i=1,\ldots, f$ in $H^Tx \leq
{\mathbbm 1}$ defines a facet of $P$. The \emph{slack matrix} $S$ of
$P$ is the $f \times v$ matrix with entries $S_{ij} = 1 - \langle h_i,
p_j \rangle$. In matrix form, letting $H = [h_1 \ldots h_f]$ and $V =
[p_1 \ldots p_v]$, we have the expression $S = {\mathbbm 1}_{f \times
  v} - H^T V$. We assume $K \subset \RR^m$ is a closed convex cone,
with dual cone $K^\ast = \{ y \in (\RR^m)^\ast \, : \, \langle
y,x\rangle \geq 0 \quad \forall x \in K\}$.

\begin{definition}{(\cite{GPT2012})}
\label{def:kfact}
A \emph{$K$-factorization} of the slack matrix $S$ of the polytope $P$
is given by $a_1, \ldots, a_f \in K^\ast$, $b_1, \ldots, b_v \in K$
such that $1 - \langle h_i , p_j \rangle = \langle a_i , b_j \rangle$
for $i = 1,\ldots,f$ and $j=1,\ldots,v$.  In matrix form, this is the
factorization
\[ 
S = {\mathbbm 1}_{f \times v} - H^T V = A^TB
\]
where $A = [a_1 \ldots a_f]$ and $B =[b_1 \ldots b_v]$.
\end{definition}
It is convenient to interpret a $K$-factorization as a composition of
linear maps as follows.  Consider $B$ as a linear map from $\RR^v
\rightarrow \RR^m$, verifying $B(\RR^v_+) \subseteq K$. Similarly,
think of $A$ as a linear map from $(\RR^f)^\ast \rightarrow
(\RR^m)^\ast$ verifying $A((\RR^f)^\ast_+)\subseteq K^\ast$.  Then,
for the adjoint operators, $B^\ast(K^\ast)\subseteq (\RR^v)^\ast_+$
and $A^\ast(K) \subseteq \RR^f_+$. Furthermore, we can think of the
slack matrix $S$ as an affine map from $\RR^v$ to $\RR^f$, and the
matrix factorization in Definition~\ref{def:kfact} suggests to define
the slack operator, $S:\RR^v \rightarrow \RR^f$, as $S(x)=({\mathbbm
  1}_{f \times v} - H^\ast \circ V)(x)$, where $V:\RR^v \rightarrow
\RR^n$ and $H:(\RR^f)^\ast \rightarrow (\RR^n)^\ast$.
\[
\xymatrix{ & &  K \ar@{}|{\rotatebox[origin=c]{270}{$\subseteq$}}[d]& & \\ 
& & {\RR^m}  \ar^{A^*}[ddrr] \ar@{..>}^{ \scriptsize \begin{pmatrix} 1 \\ \pi \end{pmatrix}}[dd]& & \\ \\ 
\RR^v \ar^{B}[uurr] \ar_{\scriptsize \begin{bmatrix}\mathbbm{1}_v^*\\V \end{bmatrix}}[rr]&  & 
\RR \oplus \RR^n \ar_{\scriptsize \begin{bmatrix} \mathbbm{1}_f & -H^* \end{bmatrix}}[rr]& & \RR^f }
\]
We define a {\em nonnegative $K$-map} from $\RR^v \rightarrow \RR^m$
(where $K\subseteq \RR^m$) to be any linear map $F$ such that
$F(\RR_+^v) \subseteq K$. In other words, a nonnegative $K$-map from
$\RR^v \rightarrow \RR^m$ is the linear map induced by an assignment
of an element $b_i \in K$ to each unit vector $e_i \in \RR^v_+$.  In
this language, a $K$-factorization of $S$ corresponds to a nonnegative
$K^\ast$-map $A \,:\, (\RR^f)^\ast \rightarrow (\RR^m)^\ast$ and a
nonnegative $K$-map $B \,:\, \RR^v \rightarrow \RR^m$ such that $S(x)
= (A^\ast \circ B)(x)$ for all $x\in \RR^v$. As a consequence, we have
the correspondence $a_i := A(e_i^*)$ for $i=1,\ldots,f$ and $b_j :=
B(e_j)$ for $j=1,\ldots,v$.

\subsection{Approximations of the nonnegative orthant} 
In this section we introduce two canonical second order cone
approximations to the nonnegative orthant, which will play a crucial
role in our developments. In what follows, $\|\cdot\|$ will always
denote the standard Euclidean norm in $\RR^n$, i.e., $\|x\| =
(\sum_{i=1}^n x_i^2)^\frac{1}{2}$.
\begin{definition}
Let $\kin^n$, $\kout^n$ be the cones
\begin{align*}
\kin^n &:= \{ x \in \RR^n \, : \, \sqrt{n-1} \cdot \|x\| \leq
\mathbbm{1}^T x \}\\ 
\kout^n&:= \{ x \in \RR^n \, : \, \|x\| \leq
\mathbbm{1}^T x \}.
\end{align*}
If the dimension $n$ is unimportant or obvious from the context, we
may drop the superscript and just refer to them as $\kin$ and
$\kout$.
\end{definition}
As the following lemma shows, the cones $\kin$ and $\kout$ provide
inner and outer approximations of the nonnegative orthant, which are
dual to each other and can be described using second-order cone
programming (\cite{AlizadehGoldfarb,Lobo}).
\begin{lemma}
The cones $\kin$ and $\kout$ are proper cones (i.e., convex,
closed, pointed and solid) in $\RR^n$ that satisfy
\[
\kin \subseteq \RR^n_+ \subseteq \kout, 
\]
and furthermore, $\kin^* = \kout$, and $\kout^* = \kin$.
\label{lem:kinkout}
\end{lemma}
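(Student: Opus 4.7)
The plan is to verify each property in the order stated, with the bulk of the work going into the duality computation. Convexity, closedness, and positive homogeneity of both $\kin$ and $\kout$ are immediate: each is the sublevel set at $0$ of a continuous, positively homogeneous, convex function (a Euclidean norm minus a linear form). Pointedness follows because if both $x$ and $-x$ lie in either cone then $\mathbbm{1}^T x = 0$, which forces $\|x\| \leq 0$ and hence $x = 0$. For solidity I would exhibit $\mathbbm{1}$ as an interior point: for $n \geq 2$, $\sqrt{n(n-1)} < n$ and $\sqrt{n} < n$, so $\mathbbm{1}$ satisfies both defining inequalities strictly; the degenerate case $n=1$ gives $\kin = \kout = \RR_+$ and the lemma is trivial.

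For $\RR^n_+ \subseteq \kout$ I simply note that when $x \geq 0$, $\|x\|^2 = \sum_i x_i^2 \leq (\sum_i x_i)^2 = (\mathbbm{1}^T x)^2$. For $\kin \subseteq \RR^n_+$ I argue by contradiction: if $x \in \kin$ had a negative coordinate $x_j$, letting $y$ be the vector obtained by zeroing out the $j$th coordinate, Cauchy--Schwarz in $\RR^{n-1}$ yields $\mathbbm{1}^T x = x_j + \mathbbm{1}^T y < \mathbbm{1}^T y \leq \sqrt{n-1}\,\|y\| \leq \sqrt{n-1}\,\|x\|$, contradicting membership in $\kin$.

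The heart of the lemma is the duality $\kout^* = \kin$, after which $\kin^* = \kout^{**} = \kout$ follows from the bipolar theorem since $\kout$ is already closed and convex. The cleanest route is the orthogonal decomposition $x = \alpha_x \mathbbm{1} + x'$ with $x' \perp \mathbbm{1}$ and $\alpha_x = \mathbbm{1}^T x / n$, and likewise $y = \alpha_y \mathbbm{1} + y'$. A short calculation gives the equivalent descriptions
\[
x \in \kout \iff \alpha_x \geq 0 \text{ and } \|x'\|^2 \leq n(n-1)\,\alpha_x^2,
\]
\[
y \in \kin \iff \alpha_y \geq 0 \text{ and } \|y'\|^2 \leq \tfrac{n}{n-1}\,\alpha_y^2.
\]
Since $\langle x, y\rangle = n \alpha_x \alpha_y + \langle x', y'\rangle$, the two reciprocal bounds combine under Cauchy--Schwarz to give $|\langle x', y'\rangle| \leq n \alpha_x \alpha_y$, proving $\kin \subseteq \kout^*$. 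For the reverse inclusion I would exhibit a separating $x \in \kout$ for each $y \notin \kin$: if $\alpha_y < 0$ take $x = \mathbbm{1}$; otherwise $\|y'\|^2 > \tfrac{n}{n-1}\alpha_y^2$, and $x = \mathbbm{1} - \sqrt{n(n-1)}\,y'/\|y'\|$ lies on the boundary of $\kout$ and satisfies $\langle x, y\rangle = n\alpha_y - \sqrt{n(n-1)}\,\|y'\| < 0$ by the same computation run in reverse.

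The only technical obstacle is keeping the two scaling constants $\sqrt{n(n-1)}$ and $\sqrt{n/(n-1)}$ paired correctly so that they cancel exactly in the Cauchy--Schwarz step; once that bookkeeping is in place the extremal case of Cauchy--Schwarz makes both inclusions tight, and no deeper machinery is needed.
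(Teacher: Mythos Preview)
Your argument is correct. The route differs from the paper's mainly in packaging. The paper derives the duality $\kin^*=\kout$ as an instance of a more general lemma about circular cones $K_a=\{x:a\|x\|\le\omega^Tx\}$ with an arbitrary unit axis $\omega$, proving $K_a^*=K_b$ (with $a^2+b^2=1$) via the spherical triangle inequality for one inclusion and an explicit extremal vector $x=a\omega-b\hat\omega$ for the other; Lemma~\ref{lem:kinkout} then follows by taking $\omega=\mathbbm{1}/\sqrt{n}$. Your orthogonal decomposition $x=\alpha_x\mathbbm{1}+x'$ is exactly this specialization written out, with Cauchy--Schwarz replacing the angular triangle inequality and your separating vector $\mathbbm{1}-\sqrt{n(n-1)}\,y'/\|y'\|$ being a rescaling of the paper's $a\omega-b\hat\omega$. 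The paper's version buys a reusable statement about general circular cones; yours is more self-contained and avoids the trigonometric detour. One further small difference: for $\kin\subseteq\RR^n_+$ the paper simply dualizes $\RR^n_+\subseteq\kout$ using self-duality of the orthant, whereas you give a direct Cauchy--Schwarz argument; both are fine, and the paper also notes (in a remark) a third proof via the alternative description $\kin=\{x:\exists t,\ \|t\mathbbm{1}-x\|\le t\}$.
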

The cones $\kin$ and $\kout$ are in fact the ``best'' second-order
cone approximations of the nonnegative orthant, in the sense that they
are the largest/smallest permutation-invariant cones with these
containment properties; see also
Remark~\ref{rem:renegar}. Lemma~\ref{lem:kinkout} is a direct
consequence of the following more general result about (scaled)
second-order cones:
\begin{lemma}
Given $\omega \in \RR^n$ with $\|\omega\|=1$ and $0 < a < 1$, consider
the set
\[
K_{a} := \{ x \in \RR^n \, : \, a \|x\| \leq \omega^T x \}.
\]
Then, $K_a$ is a proper cone, and $K_a^* = K_b$, where $b$ satisfies
$a^2 + b^2 =1$, $b > 0$.
\label{lem:kakb}
\end{lemma}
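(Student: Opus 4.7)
The plan is to reduce the statement to the duality of rescaled Lorentz cones by choosing coordinates adapted to $\omega$. First, I would verify that $K_a$ is a proper cone. Convexity is immediate from the triangle inequality: if $a\|x\|\leq \omega^Tx$ and $a\|y\|\leq \omega^Ty$, then $a\|x+y\|\leq a\|x\|+a\|y\|\leq \omega^T(x+y)$. Closedness follows because $K_a$ is the sublevel set $\{x:a\|x\|-\omega^Tx\leq 0\}$ of a continuous function. For pointedness, summing the inequalities that witness membership of $\pm x$ gives $2a\|x\|\leq 0$, forcing $x=0$ since $a>0$. For solidity, $\omega\in K_a$ satisfies $a=a\|\omega\|<1=\omega^T\omega$, a strict inequality that persists in a neighborhood of $\omega$.

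Next, I would decompose any vector as $x=\alpha\omega+u$ with $\alpha=\omega^Tx$ and $u\perp\omega$, so that $\|x\|^2=\alpha^2+\|u\|^2$. The inequality $a\|x\|\leq\omega^Tx$ forces $\alpha\geq 0$ and is then equivalent to $a^2(\alpha^2+\|u\|^2)\leq \alpha^2$, i.e., to $\|u\|\leq (b/a)\alpha$ with $b=\sqrt{1-a^2}$. In the orthogonal splitting $\RR^n=\RR\omega\oplus\omega^\perp$, the cone $K_a$ therefore becomes the rescaled Lorentz cone $\{(\alpha,u):\|u\|\leq(b/a)\alpha\}$, and symmetrically $K_b$ becomes $\{(\beta,v):\|v\|\leq(a/b)\beta\}$.

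To conclude $K_a^*=K_b$, I would argue both inclusions from this description. For $K_b\subseteq K_a^*$, take $x=\alpha\omega+u\in K_a$ and $y=\beta\omega+v\in K_b$; Cauchy--Schwarz then gives
\[
\langle x,y\rangle=\alpha\beta+\langle u,v\rangle\geq \alpha\beta-\|u\|\|v\|\geq \alpha\beta-(b/a)\alpha\cdot(a/b)\beta=0.
\]
For the reverse inclusion, let $y=\beta\omega+v\in K_a^*$; testing against $\omega\in K_a$ gives $\beta\geq 0$, and if $v\neq 0$, testing against the boundary vector $x_0=a\omega-bv/\|v\|$ (which lies in $K_a$ because its $\omega^\perp$-component has norm $b=(b/a)\cdot a$) yields $a\beta-b\|v\|\geq 0$, i.e., $\|v\|\leq(a/b)\beta$, placing $y\in K_b$.

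The only delicate step is the choice of the boundary witness $x_0$ in the reverse inclusion: it must align with $-v$ in the $\omega^\perp$-component and be scaled so that it lands exactly on $\partial K_a$, which is precisely what forces the sharp inequality $\|v\|\leq(a/b)\beta$. Every other verification is either a definitional check or a direct Cauchy--Schwarz estimate.
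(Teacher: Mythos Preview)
Your proof is correct. The properness verification and the reverse inclusion $K_a^*\subseteq K_b$ are essentially identical to the paper's: both construct the same boundary witness $x_0=a\omega-b\,v/\|v\|$ (the paper writes it as $a\omega-b\hat\omega$ with $\hat\omega$ the normalized $\omega^\perp$-component of $y$, which is exactly your $v/\|v\|$).

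The one genuine difference is in the forward inclusion $K_b\subseteq K_a^*$. You pass through the explicit orthogonal decomposition $x=\alpha\omega+u$, identify $K_a$ with the rescaled Lorentz cone $\{\|u\|\leq(b/a)\alpha\}$, and then a one-line Cauchy--Schwarz estimate $\alpha\beta-\|u\|\|v\|\geq\alpha\beta-(b/a)\alpha\cdot(a/b)\beta=0$ finishes. The paper instead works with angles on the unit sphere: writing $\cos\theta_a=a$, $\cos\theta_b=b$, it invokes the spherical triangle inequality to bound the angle between $x$ and $y$ by the sum of their angles to $\omega$, and then expands $\cos(\alpha+\beta)$. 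Your route is more algebraic and arguably more self-contained (no appeal to spherical geometry), while the paper's argument makes the underlying picture---two circular cones with complementary half-angles about a common axis---more explicit. Both are short; yours has the advantage that the Lorentz-cone identification also streamlines the reverse inclusion, since the boundary vector $x_0$ is read off directly from the description $\|u\|=(b/a)\alpha$.
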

\begin{proof}
The set $K_a$ is clearly invariant under nonnegative scalings, so it
is a cone.  Closedness and convexity of $K_a$ follow directly from the
fact that (for $a \geq 0$) the function $x \mapsto a \|x\| - \omega^T
x$ is convex. The vector $\omega$ is an interior point (since $a
\|\omega\| - \omega^T \omega = a - 1 < 0$), and thus $K_a$ is
solid. For pointedness, notice that if both $x$ and $-x$ are in $K_a$,
then adding the corresponding inequalities we obtain $2 a \|x\| \leq
0$, and thus (since $a>0$) it follows that $x=0$.
 
The duality statement $K_a^* = K_b$ is perhaps geometrically obvious,
since $K_a$ and $K_b$ are spherical cones with ``center'' $\omega$ and
half-angles $\theta_a$ and $\theta_b$, respectively, with $\cos \theta_a =
a$, $\cos \theta_b =b$, and $\theta_a+\theta_b= \pi/2$. For completeness,
however, a proof follows.  We first prove that $K_b \subseteq
K_a^*$. Consider $x \in K_a$ and $y \in K_b$, which we take to have
unit norm without loss of generality. Let $\alpha,\beta,\gamma$ be
the angles between $(x,\omega)$, $(y,\omega)$ and $(x,y)$,
respectively. The triangle inequality in spherical geometry gives
$\gamma \leq \alpha + \beta$. Then,
\[
\cos \gamma \geq \cos(\alpha+\beta) =
\cos \alpha \cos \beta - \sin \alpha \sin \beta 
\]
or equivalently
\begin{align*}
x^T y & \geq (\omega^T x) (\omega^T y) - \sqrt{1-(\omega^T x)^2} \sqrt{1-(\omega^T y)^2}
\geq a b - \sqrt{1-a^2}\sqrt{1-b^2} = 0.
\end{align*}
To prove the other direction ($K_a^* \subseteq K_b$) we use its
contrapositive, and show that if $y \not \in K_b$, then $y \not \in
K_a^*$. Concretely, given a $y$ (of unit norm) such that $b \|y\| >
\omega^T y$, we will construct an $x \in K_a$ such that $y^T x <0$
(and thus, $y \not \in K_a^*$). For this, define $x := a \omega - b
\hat \omega$, where $\hat \omega := (y - (\omega^T y)
\omega)/\sqrt{1-(\omega^T y)^2}$ (notice that $\|\hat \omega\| =1$ and
$\omega^T \hat \omega =0$).  It can be easily verified that $\omega^T
x = a$ and $\|x\|^2= a^2+b^2=1$, and thus $x \in K_a$.  However, we have
\[
y^T x = a (\omega^T y) - b \sqrt{1 - (\omega^T y)^2} < ab - b \sqrt{1-b^2} = 0,
\]
which proves that $y \not \in K_a^*$.
\end{proof}

\begin{proof}[of Lemma~\ref{lem:kinkout}]
Choosing $\omega = \mathbbm{1}/\sqrt{n}$, $a = \sqrt{(n-1) / n}$ and
$b = \sqrt{1/n}$ in Lemma~\ref{lem:kakb}, we have $\kin = K_a$ and
$\kout = K_b$, so the duality statement follows. Since $x = \sum_i x_i
e_i$, with $x_i \geq 0$, we have
\[
\|x \| \leq \sum_i x_i \|e_i\| = \sum_i x_i = \mathbbm{1}^T x, 
\]
and thus $\RR_+^n \subseteq \kout$. Dualizing this expression, and
using self-duality of the nonnegative orthant, we obtain the remaining
containment $\kin = \kout^* \subseteq (\RR^n_+)^* = \RR^n_+$.
\end{proof}

\begin{remark}
\label{rem:renegar}
Notice that $(\mathbbm{1}^T x)^2 - \|x\|^2 = 2 \sigma_2(x)$, where $\sigma_2(x)$
is the second elementary symmetric function in the variables
$x_1,\ldots,x_n$. Thus, the containment relations in
Lemma~\ref{lem:kinkout} also follow directly from the fact that the
cone $\kout$ is the $(n-2)$ derivative cone (or \emph{Renegar
  derivative}) of the nonnegative orthant; see e.g. \cite{hypRenegar}
for background and definitions and \cite{SaundersonParrilo} for their
semidefinite representability.
\end{remark}

\begin{remark}
\label{rem:Kinalt}
The following alternative description of $\kin$ is often
convenient:
\begin{equation}
\kin = \{ x \in \RR^n \, : \, \exists t \in \RR \quad \mbox{ s.t. } \|t {\mathbbm 1} - x \| \leq t \}. 
\label{eq:Kinalt}
\end{equation}
The equivalence is easy to see, since the condition above requires the
existence of $t \geq 0$ such that $t^2 (n-1) - 2 t {\mathbbm 1}^T x +
\|x\|^2 \leq 0$. Eliminating the variable $t$ immediately yields
$\sqrt{n-1} \cdot \|x\| \leq {\mathbbm 1}^T x$. The containment
$\kin \subseteq \RR^n_+$ is now obvious from this representation,
since $t - x_i \leq \| t {\mathbbm 1} -x\| \leq t$, and thus $x_i \geq
0$.
\end{remark}

\subsection{From orthants to polytopes} 
The cones $\kin$ and $\kout$ provide ``simple'' approximations to the
nonnegative orthant. As we will see next, we can leverage these to
produce inner/outer approximations of a polytope from any approximate
factorizations of its slack matrix. The constructions below will use
\emph{arbitrary} nonnegative $K^\ast$ and $K$-maps $A$ and $B$ (of suitable
dimensions) to produce approximations of the polytope $P$ (though of
course, for these approximations to be useful, further conditions will
be required).

\begin{definition}
\label{def:inout}
Given a polytope $P$ as before, a nonnegative $K^\ast$-map
$A:(\RR^f)^\ast \rightarrow (\RR^m)^\ast$ and a nonnegative $K$-map
$B:\RR^v\rightarrow \RR^m$ , we define the following two sets:
\begin{align*}
\Inn_{P}(A) &:= \left\{ x \in \RR^n \, : \, \exists y \in K, \, \textup{ s.t. }  {\mathbbm 1}-H^\ast(x) - A^{\ast}(y) \in \kin^f \right\},\\
\Out_{P}(B) &:= \left\{ V(z) \, : \, z \in \kout^v, \quad {\mathbbm 1}^T z \leq 1, \quad B(z) \in K \right\}.
\end{align*}
\end{definition}
By construction, these sets are convex, and the first observation is
that the notation makes sense as the sets indeed define an inner and
an outer approximation of $P$.

\begin{proposition} \label{prop:inclusions}
Let $P$ be a polytope as before and $A$ and $B$ be nonnegative
$K^\ast$ and $K$-maps respectively. Then $\Inn_{P}(A) \subseteq P
\subseteq \Out_{P}(B)$.
\end{proposition}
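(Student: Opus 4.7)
The plan is to prove the two containments separately, with each reducing quickly to the orthant-approximation sandwich $\kin \subseteq \RR^n_+ \subseteq \kout$ from Lemma~\ref{lem:kinkout} combined with the nonnegativity properties of the maps $A$ and $B$.

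For the inner containment $\Inn_{P}(A) \subseteq P$, I would take an arbitrary $x \in \Inn_{P}(A)$ and produce a witness $y \in K$ with $\mathbbm{1} - H^\ast(x) - A^\ast(y) \in \kin^f$. Since $\kin^f \subseteq \RR^f_+$ by Lemma~\ref{lem:kinkout}, this already gives $\mathbbm{1} - H^\ast(x) - A^\ast(y) \geq 0$ entrywise. The next step is to observe that, because $A$ is a nonnegative $K^\ast$-map, its adjoint satisfies $A^\ast(K) \subseteq \RR^f_+$ (this is the duality remark already made right after Definition~\ref{def:kfact}), so $A^\ast(y) \geq 0$. Adding $A^\ast(y)$ back in preserves the inequality, yielding $H^T x \leq \mathbbm{1}$, which is precisely the condition $x \in P$.

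For the outer containment $P \subseteq \Out_{P}(B)$, I would start with $x \in P$ and use the vertex description $P = \conv\{p_1,\ldots,p_v\}$ to write $x = V(\lambda) = \sum_j \lambda_j p_j$ for some $\lambda \in \RR^v_+$ with $\mathbbm{1}^T \lambda = 1$. The goal is to check that $\lambda$ is an admissible witness for membership in $\Out_P(B)$. The inclusion $\mathbbm{1}^T \lambda \leq 1$ is immediate, while $\lambda \in \kout^v$ follows from the orthant containment $\RR^v_+ \subseteq \kout^v$ in Lemma~\ref{lem:kinkout}. Finally, $B(\lambda) \in K$ holds directly because $B$ is a nonnegative $K$-map, i.e., $B(\RR^v_+) \subseteq K$ by definition.

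There is no genuine obstacle here; the statement is essentially a sanity check that the definitions in Definition~\ref{def:inout} pass through the approximation layer without distortion. The only subtlety worth flagging is to make sure the two ``nonnegativity'' statements are applied on the correct side: for $A$ we need the adjoint $A^\ast$ to land in $\RR^f_+$ (dualizing the $K^\ast$-map condition), while for $B$ we can use $B$ itself directly on the convex-combination coefficients. Once these are lined up, both inclusions fall out in a single line each.
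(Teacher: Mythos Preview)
Your proof is correct and matches the paper's argument essentially line for line; the only cosmetic difference is that for $P \subseteq \Out_P(B)$ the paper checks membership of each vertex $p_j = V(e_j)$ and then invokes convexity of $\Out_P(B)$, whereas you take an arbitrary $x = V(\lambda)$ with $\lambda$ a full convex combination and verify the witness directly. Both routes amount to the same verification that $\RR^v_+ \subseteq \kout^v$ and $B(\RR^v_+) \subseteq K$.
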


\begin{proof}
If $x \in \Inn_{P}(A)$, there exists $y \in K$ such that ${\mathbbm 1} - H^\ast(x) - A^{\ast}(y) \in
\kin^f \subseteq \RR^f_+$, which implies $H^\ast(x) + A^{\ast}(y) \leq
{\mathbbm 1}$. Since $A^\ast(y) \geq 0$ for $y \in K$, we have $H^\ast(x) \leq {\mathbbm
  1}$ and thus $x \in P$.
  
 For the second inclusion, by the convexity of $\Out_{P}(B)$ it is
 enough to show that the vertices of $P$ belong to this set.  Any
 vertex $p$ of $P$ can be written as $p=V(e_i)$ for some canonical
 basis vector $e_i$. Furthermore, $B(e_i) \in K$ since $B$ is a
 nonnegative $K$-map, $e_i \in \RR^v_+ \subseteq \kout^v$,
 ${\mathbbm 1}^T e_i \leq 1$, and so $p \in \Out_P(B)$ as intended.
\end{proof}

If $A$ and $B$ came from a true $K$-factorization of $S$, then $P$ has
a $K$-lift and $S = A^\ast \circ B$. Then, the subset of $\Inn_P(A)$ given by
$\left\{ x \in \RR^n \,:\, \exists y \in K \textup{ s.t. } H^\ast(x) +
A^{\ast}(y) = {\mathbbm 1} \right\}$ contains $P$, since it contains
every vertex $p_i = V(e_i)$ of $P$. This can be seen by taking $y =
B(e_i) \in K$ and checking that ${\mathbbm 1} - H^\ast(p_i) =
({\mathbbm 1}_{f \times v} - H^\ast \circ V)(e_i) = (A^\ast \circ B) (e_i)
= A^\ast(y)$.  From Proposition~\ref{prop:inclusions} it then follows
that $\Inn_P(A) = P$. The definition of $\Out_P(B)$ can be similarly
motivated. An alternative derivation is through polarity, as we will
see in Theorem~\ref{thm:polarity}.

\begin{remark} \label{rem:alt Out}
Since the origin is in $P$, the set $\Out_P(B)$ can also be defined with the inequality ${\mathbbm 1}^Tz \leq 1$ replaced by the corresponding equation:
$$\Out_P(B) = \left\{ V(z) \, : \, z \in \kout^v, \quad {\mathbbm 1}^T z = 1, \quad B(z) \in K \right\}.$$
To see this, suppose $q := V(z)$ such that $z \in \kout^v$, ${\mathbbm 1}^T z \leq 1$ and $B(z) \in K$. 
Then there exists $s \geq 0$ such that ${\mathbbm 1}^T z + s = 1$. Since $0 \in P$, there exists $0 \leq \lambda_i $ with $\sum \lambda_i =1$ such that $0 = \sum \lambda_i p_i$ where $p_i$ are the vertices of $P$.  
Let $\tilde{z} := s \lambda + z \in \RR^v$ where $\lambda = (\lambda_i)$. 
Then $\tilde{z} \in \kout^v$ since $s \geq 0$, $\lambda \in \RR^v_+ \subseteq \kout^v$ and $z \in \kout^v$. 
Further, ${\mathbbm 1}^T \tilde{z} = s {\mathbbm 1}^T \lambda + {\mathbbm 1}^Tz = 1$. We also have that $B(\tilde{z}) = B(s \lambda + z)  = sB(\lambda) + B(z) \in K$ since each component is in $K$ (note that $B(\lambda) \in K$ since $\lambda \in \RR^v_+$). 
Therefore, we can write $q =V(\tilde{z})$ with $\tilde{z}\in \kout^v$, ${\mathbbm 1}^T \tilde{z} = 1$ and $B(\tilde{z}) \in K$
which proves our claim.  This alternate formulation of $\Out_P(B)$ will be useful in Section 4. However, Definition~\ref{def:inout}
is more natural for the polarity results in Section~\ref{subsec:polarity}.
\end{remark}

\begin{example}\label{Ex:zeroapprox}
Let $P$ be the $n$-dimensional simplex given by the inequalities:
$$P = \left\{ x \in \RR^n \,:\ 1+x_1 \geq 0, \; \ldots, \; 1+x_n \geq 0, \; 1- \textstyle{\sum}_i x_i \geq 0 \right\}$$ 
with vertices $(n,-1,\cdots,-1), (-1,n,-1,\ldots,-1), \ldots, (-1,\ldots,-1,n),(-1,\ldots,-1)$. The slack matrix of this polytope is 
the $(n+1) \times (n+1)$ diagonal matrix with all diagonal entries equal to $n+1$. Choosing $A$ to be the zero map, for any cone $K$ we have 
\begin{align*}
\Inn_{P}(0) & = \left\{ x \in \RR^n \, : \, n \left(\sum_i
(1+x_i)^2+(1-\sum_i x_i)^2 \right) \leq (n+1)^2 \right\}\\ & = \left\{
x \in \RR^n \, : \, \sum_i x_i^2 + \left(\sum_i x_i \right)^2 \leq
\frac{(n+1)}{n}\right \}.
\end{align*}
For the case of $n=2$ we have:
\[
\Inn_P(0) =  \left\{ (x_1, x_2) \,:\, x_1^2 + x_2^2 + (x_1+x_2)^2 \leq \frac{3}{2} \right\} 
= \left\{  (x_1,x_2) \,:\, 3(x_1+x_2)^2 + (x_1-x_2)^2 \leq 3 \right\}.
\]

For the outer approximation, if we choose $B=0$ then we obtain the body 
\[
\Out_P(0) = \left\{ \left( -\sum_{i=1}^{n+1} z_i + (n+1)z_j,  \,\,\,j=1,\ldots,n \right), \,\,\,\|z\| \leq \sum_{i=1}^{n+1} z_i \leq 1 \right\}.
\]

For $n=2$, $\Out_P(0) = \{ (2z_1-z_2-z_3, -z_1+2z_2-z_3) \,:\, \|z\| \leq z_1+z_2+z_3 \leq 1 \},$ and eliminating variables we get
$$\Out_P(0) =  \left\{  (x_1,x_2) \,:\, 3(x_1+x_2)^2 + (x_1-x_2)^2 \leq 12 \right\}.$$
The simplex and its approximations can be seen in Figure~\ref{fig:zeroapprox1}.

Note that the bodies $\Inn_P(0)$ and $\Out_P(0)$ do not depend on the
choice of a cone $K$ and are hence canonical convex sets associated to
the given representation of the polytope $P$. However, while
$\Out_P(0)$ is invariant under translations of $P$ (provided the
origin remains in the interior), $\Inn_P(0)$ is sensitive to
translation, i.e., to the position of the origin in the polytope
$P$. To illustrate this, we translate the simplex in the above example
by adding $(\frac{1}{2}, \frac{1}{2}, \ldots, \frac{1}{2})$ to it and
denote the resulting simplex by $Q$. Then $$Q = \left\{ x \in \RR^n
\,:\, 1+2x_1 \geq 0, \ldots, 1+2x_n \geq 0,\,\, 1 - \sum
\frac{2}{n+2}x_i \geq 0 \right\}$$ and its vertices are
$(n+\frac{1}{2},-\frac{1}{2},-\frac{1}{2},\ldots,-\frac{1}{2}),
\ldots, ( -\frac{1}{2},-\frac{1}{2},\ldots,
-\frac{1}{2},n+\frac{1}{2}), (-\frac{1}{2}, -\frac{1}{2}, \ldots,
-\frac{1}{2})$.

\begin{figure} 
\centering
\includegraphics[scale=0.2]{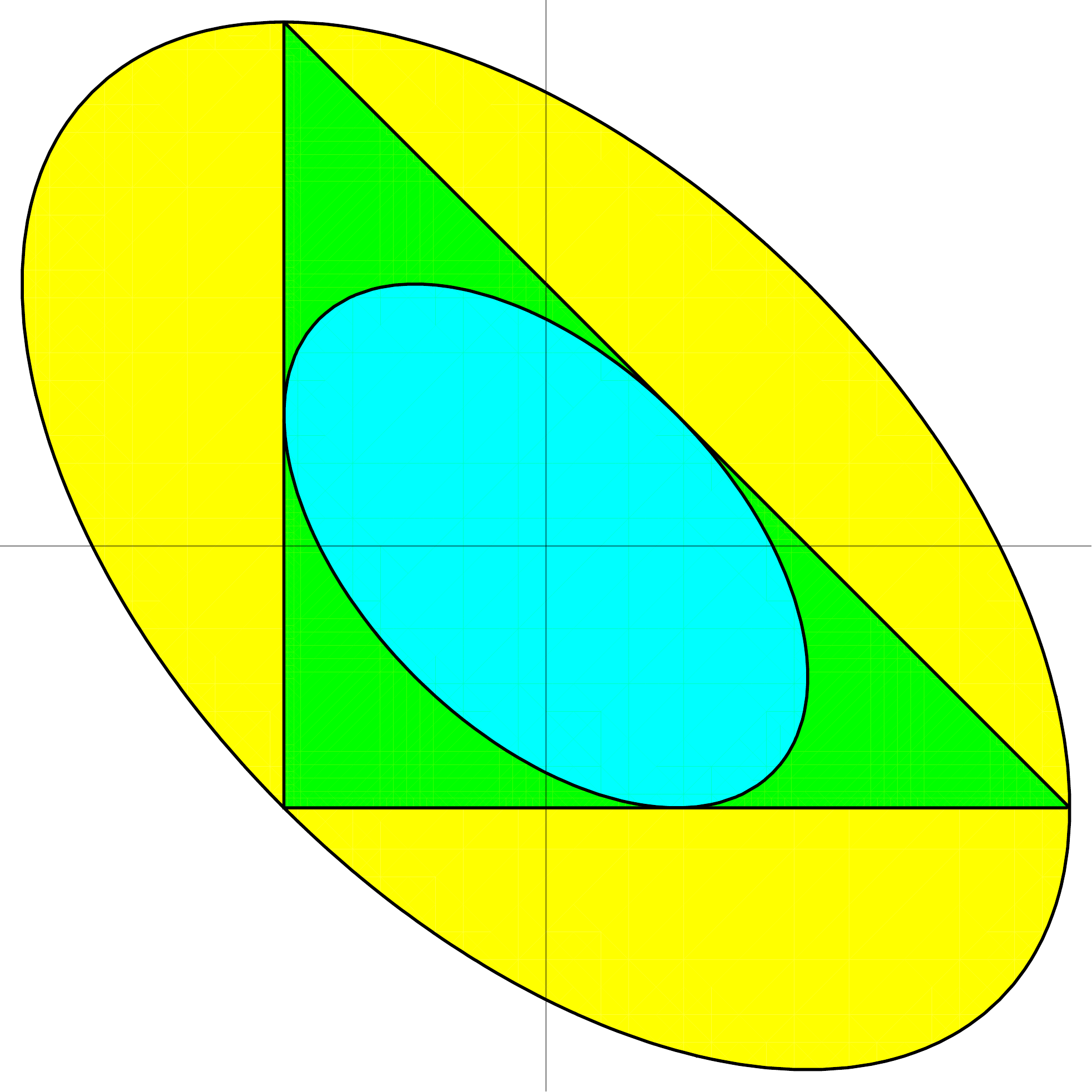}
\caption{$\Inn_{P}(0)$ and $\Out_P(0)$ for a triangle centered at the origin.}
\label{fig:zeroapprox1}
\end{figure}

For $n=2$, plugging into the formula for the inner approximation we
get $$\Inn_Q(0)=\{(x,y):(3(x+y)-2)^2+16(x-y)^2 \leq 16\},$$ while
doing it for the outer approximation
yields $$\Out_Q(0)=\{(x,y):3(x+y-1)^2+(x-y)^2 \leq 12\}.$$ So we can
see that while $\Out_Q(0)$ is simply a translation of the previous
one, $\Inn_Q(0)$ has changed considerably as can be seen in Figure~\ref{fig:zeroapprox2}.

\begin{figure} 
\centering
\includegraphics[scale=0.25]{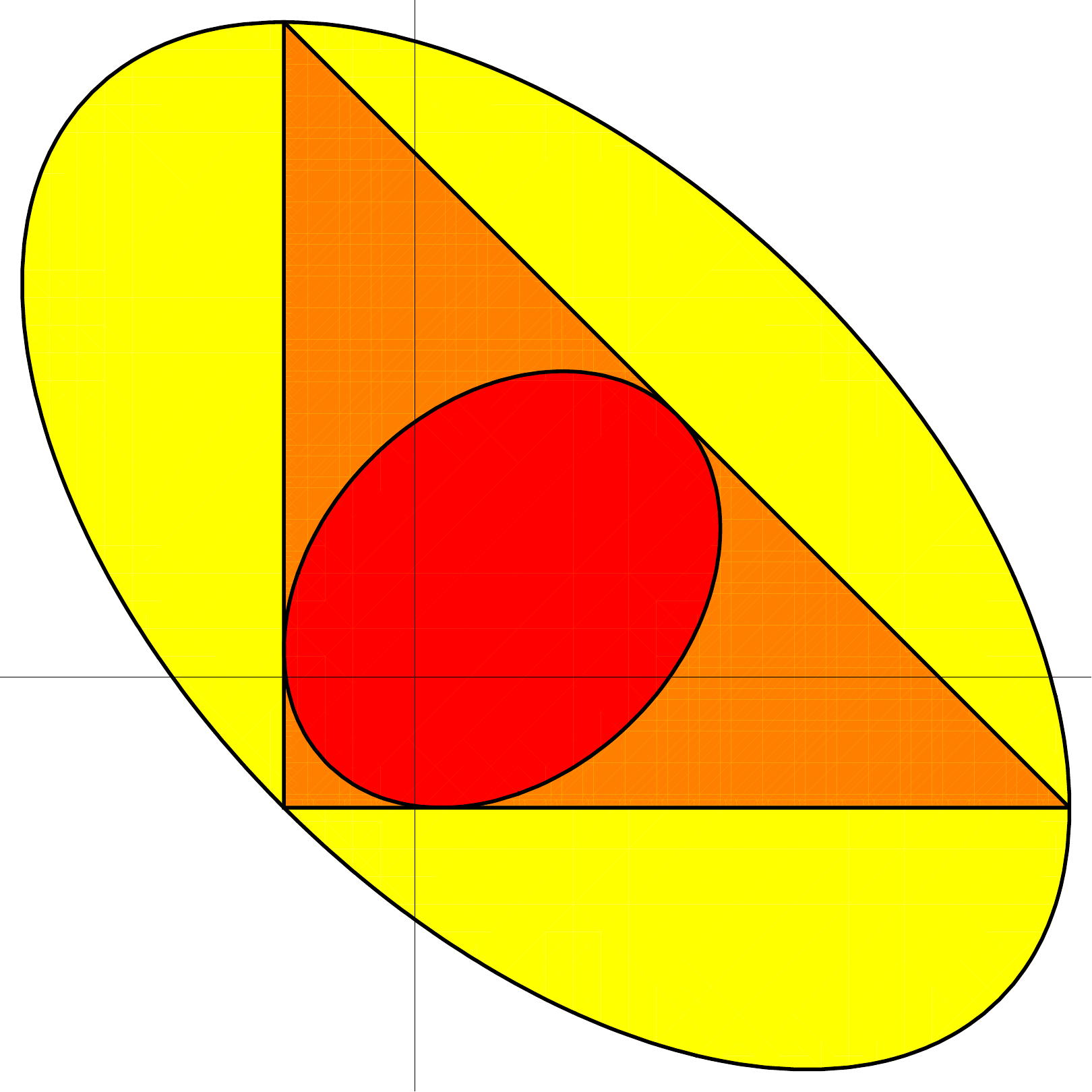}
\caption{$\Inn_{Q}(0)$ and $\Out_Q(0)$ for a triangle not centered at the origin.}
\label{fig:zeroapprox2}
\end{figure}
\end{example}

\subsection{Closedness of $\Inn_P(A)$}
It is easy to see that $\Out_P(B)$
is closed and bounded. Indeed, since ${\mathbbm 1}$ lies in the interior of $\kin^v$ which is the polar of $\kout^v$, for every $z \in  \kout^v$, ${\mathbbm 1}^T z > 0$. Therefore, there is no $z \in \kout^v$ such that ${\mathbbm 1}^T (\lambda z) = \lambda ({\mathbbm 1}^T z) \leq 1$ for all $\lambda > 0$ and so, $\{ z \in \kout^v \,:\, {\mathbbm 1}^T z \leq 1 \}$ is bounded. It is closed since $\kout^v$ is closed.
Further, $\{ z \in \kout^v \,:\, {\mathbbm 1}^T z \leq 1, \, B(z) \in K \}$ is also compact since $B^{-1}(K) \subset \RR^v$ is closed as $B$ is a linear map and $K$ is closed. Therefore, $\Out_P(B)$ is compact since it is the linear image of a compact set.
The set $\Inn_P(A)$ is bounded since it is contained in the polytope $P$. We will also see in Proposition~\ref{prop:colcone error bound} that it has an interior. However, $\Inn_P(A)$ may not be closed.
 
\begin{example}Consider the three dimensional cone
$$K=\left\{(a,b,t) \in \RR^3 \ : \ \begin{pmatrix} a+b & 0 & 2(a-b)\\0 & t & 2(a+b)-t\\2(a-b) & 2(a+b)-t & t \end{pmatrix} \succeq 0\right\},$$
and take $A^*$ to be the map sending $(a,b,t)$ to $(a,b,0)$. Then, $$A^*(K)=\{(x,y,0) \ : x>0, y>0\} \cup \{(0,0,0)\}.$$ For any triangle $P \subset \RR^2$ given by 
${ 1}- H^*(x) \geq 0$ we then have that 
$$\Inn_{P}(A)=\left\{ x \in \RR^2 \, : \,  {1}-H^*(x) \in \kin^3 + A^*(K) \right\}.$$
Since $\kin^3$ is a second order cone that is strictly contained in $\RR^3_+$ and 
$A^*(K)$ has the description given above, the cone $\kin^3 + A^*(K)$ is not closed.
The set $\Inn_{P}(A)$ is an affine slice of this non-closed cone and therefore, may not be closed.
Taking, for example, the triangle 
$$P = \left\{ (x,y) \in \RR^2 \,:\, \begin{pmatrix} 1-x \\ 1-y \\ 1+x+y \end{pmatrix} \geq 0 \right\},$$
$$\Inn_{P}(A)=\left\{ (x,y) \in \RR^2 \, : \,  (1-x,1-y,1+x+y) \in \kin^3 + A^*(K) \right\},$$
which is not closed, as can be seen in Figure~\ref{fig:nonclosed}.
\begin{figure} 
\centering
\includegraphics[scale=0.3]{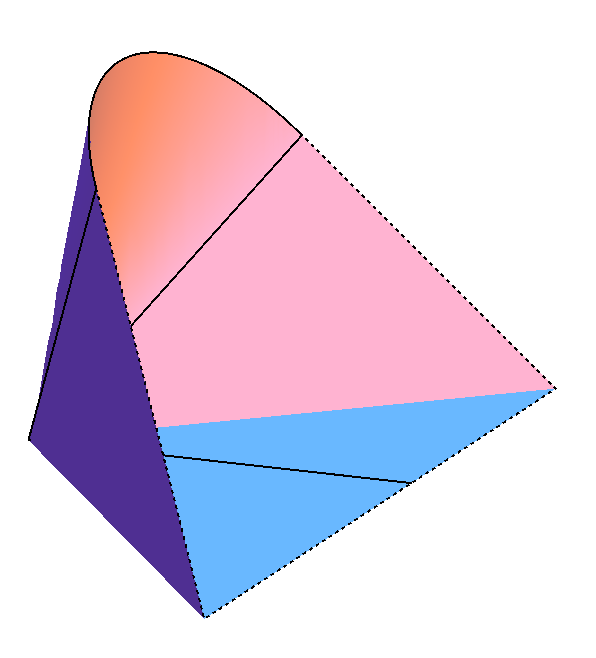} \hspace{1cm}
\includegraphics[scale=0.3]{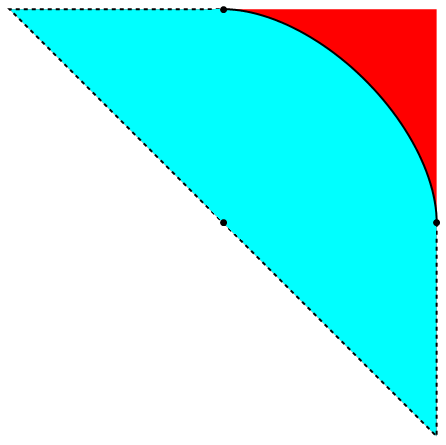}
\caption{Affine slice of $\kin^3 + A^*(K)$, and the corresponding projection $\Inn_{P}(A)$.}
\label{fig:nonclosed}
\end{figure}
Notice that in this example, $A$ is a nonnegative $K^\ast$-map, 
but $A^\ast(K)$ is not closed. 
\end{example}
 
\begin{remark} \label{rem:A*(K) closed}
If $A^\ast(K)$ is
closed, then $\kin^f + A^\ast(K)$ will be closed by \cite[Corollary
  9.12]{Rockafellar} since both cones are closed, and contained in
$\RR^f_+$, which means that there is no direction of recession of one
cone whose opposite is a direction of recession of the other cone. If
$\kin^f + A^\ast(K)$ is closed then $\Inn_P(A)$ will be closed since
it is an affine slice of $\kin^f + A^\ast(K)$.
\end{remark}

We now discuss results from \cite{Pataki1} and \cite{Pataki2} that give conditions under which $A^\ast(K)$ is closed. 

\begin{definition} \cite[Definition 1.1]{Pataki1}, \cite{Borwein-Wolkowicz}
A closed convex cone $C$ is {\em nice} if $C^\ast + F^\perp$ is closed for all faces $F$ of $C$.
\end{definition}

Nonnegative orthants, positive semidefinite cones, and second order cones are all nice. Given a convex set $C$, let $\textup{ri }C$ denote the relative interior of $C$. For $x \in C$, define 
$$\textup{dir} (x,C) = \{ y \,:\, x + \varepsilon y \in C \textup{ for some } \varepsilon > 0 \}$$
and $\textup{cl}(\textup{dir} (x,C))$ denote the closure of $\textup{dir} (x,C)$. For example, if $C \subset \RR^m$ is a full-dimensional closed convex cone and $x$ is in the interior of $C$, then $\textup{dir} (x,C) = \RR^m$. If $x \in \textup{ri } F$ for a face $F$ of $C$, then $\textup{dir} (x,C)$ contains the linear span of $F$ and hence, 
$\textup{cl}(\textup{dir} (x,C))\backslash \textup{dir} (x,C)$ does not contain the linear span of $F$. 

\begin{theorem} \cite[Theorem 1]{Pataki2} \label{thm:Pataki}
Let $M$ be a linear map, $C$ a nice cone and $x \in \textup{ri }(C \cap \textup{range}(M))$. Then $M^\ast C^\ast$ is closed if and only if $\textup{range}(M) \cap (\textup{cl}(\textup{dir} (x,C)) \backslash \textup{dir} (x,C)) = \emptyset$.
\end{theorem}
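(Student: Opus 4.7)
The plan is to reduce the closedness question to a Minkowski sum of a cone and a linear subspace, and then exploit niceness at the minimal face determined by $x$. The first step is the standard reduction: for a linear map $N$ and a closed convex cone $D$, the image $N(D)$ is closed if and only if $D + \ker(N)$ is closed. Applying this to $N = M^\ast$ and $D = C^\ast$ and using $\ker(M^\ast) = \textup{range}(M)^\perp =: L$, the original problem becomes: decide when $C^\ast + L$ is closed. Note by biduality that $\textup{cl}(C^\ast + L) = (C \cap L^\perp)^\ast = (C \cap \textup{range}(M))^\ast$, so the obstruction to closedness lives in the gap between $C^\ast + L$ and this polar.

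The second step is to introduce the minimal face $F$ of $C$ containing $C \cap \textup{range}(M)$. Since $x \in \textup{ri}(C \cap \textup{range}(M))$ and this set is convex, $x$ lies in $\textup{ri}(F)$ and $\textup{span}(F) = \textup{span}(C \cap \textup{range}(M))$. By the niceness hypothesis, $C^\ast + F^\perp$ is closed, and its polar is $C \cap \textup{span}(F) = F$. Since $F \supseteq C \cap \textup{range}(M)$, the inclusion $C^\ast + L \subseteq \textup{cl}(C^\ast + L) \subseteq C^\ast + F^\perp$ holds in the sense of polar duality, and the entire question reduces to detecting when the gap between $C^\ast + L$ and the closed set $C^\ast + F^\perp$ is empty.

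Now I would bring in the geometric characterisation of $\textup{dir}(x,C)$ for $x \in \textup{ri}(F)$: since $C$ is a cone, $\textup{dir}(x,C) = \bigcup_{\varepsilon>0} \frac{1}{\varepsilon}(C - x)$, and its closure is the tangent cone $T_x(C) = C + \textup{span}(F)$. The difference $\textup{cl}(\textup{dir}(x,C)) \setminus \textup{dir}(x,C)$ is precisely the set of tangent directions at $x$ that are only reachable as limits, never from a single feasible step; equivalently, it is $T_x(C) \setminus (C + \textup{span}(F) \cap \textup{dir}(x,C))$. A vector $y \in \textup{range}(M)$ lying in this gap gives a concrete way to construct $c_n^\ast \in C^\ast$ and $\ell_n \in L$ with $c_n^\ast + \ell_n$ converging to a point of $C^\ast + F^\perp$ that is not in $C^\ast + L$, witnessing non-closedness. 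Conversely, if the intersection is empty, every tangent direction to $C$ at $x$ that lies in $\textup{range}(M)$ is already a feasible direction, which is exactly what is needed to turn a convergent sequence in $C^\ast + L$ into one whose limit decomposes in $C^\ast + L$.

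The main obstacle I anticipate is the bookkeeping in this last step: translating "feasible vs. merely tangent direction in $\textup{range}(M)$" into the separation-of-scales construction that either produces or forbids a sequence in $C^\ast + L$ escaping to a non-achievable limit. This is where niceness is consumed — it guarantees that the only obstructions to closedness are captured by the face $F$ containing $x$, so the pathological directions must appear in $\textup{cl}(\textup{dir}(x,C)) \setminus \textup{dir}(x,C)$ and nowhere else. Everything else is standard convex duality once this geometric dictionary between tangent-cone anomalies and closedness failures of $C^\ast + L$ is established.
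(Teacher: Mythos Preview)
The paper does not prove this theorem; it is quoted from \cite{Pataki2} and invoked only as a black box to establish Corollary~\ref{cor:nice implies closed}. There is no proof in the paper to compare your proposal against.

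For what it is worth, your sketch is headed in the direction of Pataki's original argument --- reduce closedness of $M^\ast C^\ast$ to closedness of $C^\ast + \ker(M^\ast)$, bring in the minimal face $F$ through $x$, use niceness on $F$, and identify the obstruction with tangent-but-not-feasible directions in $\textup{range}(M)$. Two intermediate claims are off, however. First, $\textup{span}(F) = \textup{span}(C \cap \textup{range}(M))$ is false in general: with $C = \RR^2_+$ and $\textup{range}(M)$ the diagonal line, the minimal face is all of $C$. Second, the inclusion $\textup{cl}(C^\ast + L) \subseteq C^\ast + F^\perp$ goes the wrong way: polarity applied to $C \cap \textup{range}(M) \subseteq F$ yields $C^\ast + F^\perp \subseteq \textup{cl}(C^\ast + L)$, and the reverse need not hold since $L = \textup{range}(M)^\perp$ is not in general contained in $F^\ast = C^\ast + F^\perp$. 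These would need to be repaired before the bookkeeping you flag in the last step can be carried out.
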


\begin{corollary} \label{cor:nice implies closed}
If $K^\ast$ is a nice cone and $A$ is a nonnegative $K^\ast$-map, then both $A^\ast(K)$ and $\Inn_P(A)$ are closed.
\end{corollary}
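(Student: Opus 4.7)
The plan is to deduce both assertions in two steps: first, show $A^\ast(K)$ is closed by invoking Pataki's Theorem~\ref{thm:Pataki}, and second, derive closedness of $\Inn_P(A)$ from Remark~\ref{rem:A*(K) closed}. To set up Theorem~\ref{thm:Pataki}, I take the linear map in its statement to be $M = A$ and the cone to be $C = K^\ast$, which is nice by hypothesis. Then $M^\ast C^\ast = A^\ast(K)$, so it suffices to exhibit a point $x \in \textup{ri}(K^\ast \cap \textup{range}(A))$ satisfying
\[
\textup{range}(A) \cap \bigl( \textup{cl}(\textup{dir}(x, K^\ast)) \setminus \textup{dir}(x, K^\ast) \bigr) = \emptyset.
\]

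The natural candidate is $x := A(\mathbbm{1}) = \sum_{i=1}^f a_i$. The key observation is that $\mathbbm{1}$ lies in the \emph{interior} of $(\RR^f)^\ast_+$, so for any $z \in (\RR^f)^\ast$ there is some $\varepsilon > 0$ with $\mathbbm{1} + \varepsilon z \in (\RR^f)^\ast_+$. Since $A$ is a nonnegative $K^\ast$-map, this yields $x + \varepsilon A(z) = A(\mathbbm{1} + \varepsilon z) \in K^\ast$. Hence every $y = A(z) \in \textup{range}(A)$ is a feasible direction at $x$, i.e., $\textup{range}(A) \subseteq \textup{dir}(x, K^\ast)$, which immediately forces the intersection above to be empty. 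The same small-perturbation argument, restricted to directions lying in $\textup{aff}(K^\ast \cap \textup{range}(A)) - x \subseteq \textup{range}(A)$, places $x$ in $\textup{ri}(K^\ast \cap \textup{range}(A))$. Theorem~\ref{thm:Pataki} then yields the closedness of $A^\ast(K)$.

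For the second assertion, I invoke Remark~\ref{rem:A*(K) closed} verbatim: with $A^\ast(K)$ now closed, the sum $\kin^f + A^\ast(K)$ is closed by Rockafellar's Corollary~9.12, since both cones lie in $\RR^f_+$ and thus share no opposite recession directions; then $\Inn_P(A)$, being an affine preimage (slice) of this cone, is closed as well. The only subtlety in the whole argument is the bookkeeping in the first step --- keeping straight that the ``$M$'' in Theorem~\ref{thm:Pataki} is $A$ rather than $A^\ast$, so that niceness is imposed on $K^\ast$ exactly where Pataki's theorem needs it. After this alignment, using $\mathbbm{1}$ as the ``most interior'' point of $(\RR^f)^\ast_+$ is the natural and essentially sole observation required.
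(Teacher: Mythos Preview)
Your proof is correct and follows the same overall strategy as the paper: apply Theorem~\ref{thm:Pataki} with $M = A$ and $C = K^\ast$ to conclude that $A^\ast(K)$ is closed, then invoke Remark~\ref{rem:A*(K) closed} for $\Inn_P(A)$. The only difference lies in how the hypothesis of Theorem~\ref{thm:Pataki} is verified. The paper argues via the face structure of $K^\ast$: it lets $F$ be the minimal face of $K^\ast$ whose linear span contains $\textup{range}(A)$, observes that any $x \in \textup{ri}(K^\ast \cap \textup{range}(A))$ lies in $\textup{ri}\,F$, and then uses the remark preceding Theorem~\ref{thm:Pataki} that $\textup{span}(F) \subseteq \textup{dir}(x,K^\ast)$ to conclude $\textup{range}(A) \subseteq \textup{dir}(x,K^\ast)$. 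You instead fix the explicit point $x = A(\mathbbm{1})$ and exploit the interiority of $\mathbbm{1}$ in $(\RR^f)^\ast_+$ together with the positivity of $A$ to obtain $\textup{range}(A) \subseteq \textup{dir}(x,K^\ast)$ by a one-line perturbation. Your route is a bit more self-contained, since it bypasses the face-theoretic step $x \in \textup{ri}\,F$ that the paper asserts without detailed justification; the paper's route, in exchange, verifies the condition for \emph{every} $x \in \textup{ri}(K^\ast \cap \textup{range}(A))$ rather than one particular choice.
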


\begin{proof} Since $A$ is a nonnegative $K^\ast$-map, $A$ sends $e_i^\ast \in (\RR^f)^\ast$ to $a_i \in K^\ast$. Therefore, $\textup{range}(A)$ 
is contained in the linear span of a face of $K^\ast$. Let $F$ be a
minimal such face.  If $x \in \textup{ri }(K^\ast \cap \textup{range}(A))$, 
then $x$ lies in $\textup{ri } F$ which means that
$\textup{cl}(\textup{dir} (x, K^\ast)) \backslash \textup{dir}
(x,K^\ast)$ does not contain the linear span of $F$ and hence does not
contain $\textup{range}(A)$. This implies that $\textup{range}(A) \cap
(\textup{cl}(\textup{dir} (x, K^\ast)) \backslash \textup{dir}
(x,K^\ast)) = \emptyset$ and $A^\ast(K)$ is closed by
Theorem~\ref{thm:Pataki}. By Remark~\ref{rem:A*(K) closed}, it follows
that $\Inn_P(A)$ is closed.
\end{proof}

\begin{remark} \label{rem:assume inn closed}
By Corollary~\ref{cor:nice implies closed}, $\Inn_P(A)$ is closed when $K$ is a nonnegative orthant, psd cone or a 
second order cone. Since these are precisely the cones that are of interest to us in this paper, we will assume that $\Inn_P(A)$ is closed in the rest of this paper. While this assumption will be convenient in the proofs of several forthcoming results, 
we note that the results would still be true if we were to replace $\Inn_P(A)$ by its closure everywhere.
\end{remark}
  
\subsection{Polarity} \label{subsec:polarity}
We now show that our approximations behave nicely under
polarity. Recall that the polar of the polytope $P \subset \RR^n$ is
the polytope $P^\circ = \{ y \in (\RR^n)^\ast \,:\, \langle y,x
\rangle \leq 1, \,\,\forall \,\, x \in P \}$. The face lattices of $P$
and $P^\circ$ are anti-isomorphic. In particular, if $P = \{ x \in
\RR^n \,:\, H^Tx \leq {\mathbbm1} \}$ with vertices $p_1, \ldots, p_v$
as before, then the facet inequalities of $P^\circ$ are $\langle y,p_i
\rangle \leq 1$ for all $i=1,\ldots,v$ and the vertices are $h_1,
\ldots, h_f$. Therefore, given a nonnegative $K^\ast$-map
$A:(\RR^f)^\ast \rightarrow (\RR^m)^\ast$ and a nonnegative $K$-map
$B:\RR^v \rightarrow \RR^m$ we can use them, as above, to define
approximations for $P^{\circ}$, $\Inn_{P^{\circ}}(B)$ and
$\Out_{P^{\circ}}(A)$, since facets and vertices of $P$ and $P^\circ$
exchange roles. By applying polarity again, we potentially get
new (different) outer and inner approximations of $P$ via
Proposition~\ref{prop:inclusions}. We now prove that in fact, we
recover the same approximations, and so in a sense, the approximations
are dual to each other.

As noted in Remark~\ref{rem:assume inn closed}, we are assuming that $\Inn_P(A)$ is closed.

\begin{theorem} 
\label{thm:polarity}
Let $P$ be a polytope, $A$ a nonnegative $K^\ast$-map and $B$ a nonnegative $K$-map as before. Then, 
$$\Out_{P^{\circ}}(A) = (\Inn_{P}(A))^{\circ},$$ and equivalently,
$$\Inn_{P^{\circ}}(B) = (\Out_{P}(B))^{\circ}.$$
\end{theorem}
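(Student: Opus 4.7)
The plan is to establish the first identity $\Out_{P^\circ}(A) = (\Inn_P(A))^\circ$ directly via two containments, and then deduce the second identity from it via the bipolar theorem applied to $P^\circ$. First I would unfold both sides. By the definitions applied to $P^\circ$ (whose vertex matrix is $H \colon (\RR^f)^\ast \to (\RR^n)^\ast$, with ambient cone $K^\ast$ in place of $K$),
\[
\Out_{P^\circ}(A) = \{ H(\lambda) \,:\, \lambda \in \kout^f,\, \mathbbm{1}^T \lambda \leq 1,\, A(\lambda) \in K^\ast \},
\]
and $\Inn_P(A) = \{ x \in \RR^n \,:\, \mathbbm{1} - H^\ast(x) \in C \}$ where $C := \kin^f + A^\ast(K)$. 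Under the standing closedness assumption (Remark~\ref{rem:A*(K) closed}), $C$ is a closed convex cone, so by Lemma~\ref{lem:kinkout} and the standard identity $(C_1 + C_2)^\ast = C_1^\ast \cap C_2^\ast$,
\[
C^\ast = \kout^f \cap \{ \lambda \,:\, A(\lambda) \in K^\ast \}.
\]

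For the easy containment $\Out_{P^\circ}(A) \subseteq (\Inn_P(A))^\circ$, take $z = H(\lambda)$ as above and any $x \in \Inn_P(A)$ with witnesses $y \in K$ and $w := \mathbbm{1} - H^\ast(x) - A^\ast(y) \in \kin^f$. Pairing $\lambda$ with $w$ gives $\langle \lambda, w\rangle \geq 0$ since $\lambda \in (\kin^f)^\ast = \kout^f$; combining this with $\langle A(\lambda), y\rangle \geq 0$ (from $A(\lambda) \in K^\ast$ and $y \in K$) yields $\langle z, x\rangle \leq \mathbbm{1}^T \lambda \leq 1$. For the reverse containment, I would show $(\Out_{P^\circ}(A))^\circ \subseteq \Inn_P(A)$ and flip via the bipolar theorem, which applies because $\Out_{P^\circ}(A)$ is closed, convex, and contains the origin. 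If $y \in (\Out_{P^\circ}(A))^\circ$, then $\langle H(\lambda), y\rangle \leq 1$ for every $\lambda \in C^\ast$ with $\mathbbm{1}^T\lambda \leq 1$. Since $\mathbbm{1}$ lies in the interior of $\kin^f \subseteq C$, one has $\mathbbm{1}^T\lambda > 0$ for every nonzero $\lambda \in C^\ast$; rescaling to $\mathbbm{1}^T\lambda = 1$ upgrades the inequality to $\langle \lambda, \mathbbm{1} - H^\ast(y)\rangle \geq 0$ for all $\lambda \in C^\ast$. The bipolar identity $C^{\ast\ast} = C$ (which uses closedness of $C$) then gives $\mathbbm{1} - H^\ast(y) \in C$, i.e., $y \in \Inn_P(A)$.

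Finally, the second identity follows by applying the first to the polytope $P^\circ$ with ambient cone $K^\ast$, and with the roles of $A$ and $B$ interchanged: $B$ is a nonnegative $K$-map (equivalently a $(K^\ast)^\ast$-map) on the $v$ indices, which are the vertices of $P$ and the facets of $P^\circ$; and $A$ is a nonnegative $K^\ast$-map on the $f$ indices, which are the facets of $P$ and the vertices of $P^\circ$. This yields $\Out_{(P^\circ)^\circ}(B) = (\Inn_{P^\circ}(B))^\circ$. Since $P$ contains the origin in its interior, $(P^\circ)^\circ = P$, so $\Out_P(B) = (\Inn_{P^\circ}(B))^\circ$, and one more application of the bipolar theorem gives $(\Out_P(B))^\circ = \Inn_{P^\circ}(B)$. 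The main technical subtlety throughout is closedness: both $C^{\ast\ast} = C$ and the bipolar theorem fail without it, but Remark~\ref{rem:A*(K) closed} and Corollary~\ref{cor:nice implies closed} together with the standing assumption ensure the required closedness properties for $C$, $A^\ast(K)$, $\Inn_P(A)$, and $\Out_{P^\circ}(A)$.
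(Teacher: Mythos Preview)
Your proof is correct and takes a genuinely different route from the paper's. The paper proves $(\Inn_P(A))^\circ = \Out_{P^\circ}(A)$ by writing $\max\{z^Tx : x \in \Inn_P(A)\}$ as a conic program, invoking strong duality (Slater's condition is verified via Proposition~\ref{prop:colcone error bound}, which shows $\Inn_P(A)$ has nonempty interior), and reading off the dual as $\min\{\mathbbm{1}^Tp : p \in \kout^f,\, z = H(p),\, A(p)\in K^\ast\}$. You instead package the feasibility region of $\Inn_P(A)$ as an affine preimage of the cone $C = \kin^f + A^\ast(K)$, compute $C^\ast$ directly, and argue both inclusions via the bipolar theorem. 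Your approach is more elementary in that it avoids the optimization-duality formalism and the forward reference to Proposition~\ref{prop:colcone error bound}; the only ``interior'' fact you need is that $\mathbbm{1}\in\textup{int}(\kin^f)\subseteq\textup{int}(C)$, which is immediate. On the other hand, the paper's argument is more compact and makes the optimization interpretation of the polar explicit. Both routes hinge on the same closedness assumption for $C$ (Remark~\ref{rem:A*(K) closed}), which you correctly flag.
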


\begin{proof} 
Note that the two equalities are equivalent as one can be obtained
from the other by polarity. Therefore, we will just prove the first
statement. For notational convenience, we identify $\RR^n$ and its
dual. By definition,
$$(\Inn_{P}(A))^{\circ} = \{ z \in \RR^n \,:\,  z^T x   \leq 1 \;\,\forall \, x \in \Inn_{P}(A) \}.$$
Consider then the problem
\[
\textup{max } \{  z^T x  \,:\, x \in \Inn_P(A) \} = \textup{max }  \{ z^Tx \,:\, {\mathbbm 1} - H^\ast(x) - A^{\ast}(y) \in \kin^f, \,\, y \in K\}.
\]
This equals the problem
$$\max_{x, y \in K} \, \min_{p \in (\kin^f)^*} 
  \{ z^T x + p^T ({\mathbbm 1} - H^\ast(x) - A^{\ast}(y)) \}.$$
  Strong duality  \cite{BoydVandenberghe} holds since the original problem 
  $\textup{max } \{  z^Tx \,:\, x \in \Inn_P(A) \}$ is a convex optimization problem and $\Inn_P(A)$ has an interior 
 as we will see in Proposition~\ref{prop:colcone error bound}. 
This allows us to switch the order of min and max, to obtain
\[
\min_{p \in (\kin^f)^*} \, \max_{x, y \in K} \{ {\mathbbm 1}^T p + x^T(z-H(p)) - A(p)^Ty \}.
\]
For this max to be bounded above, we need $z = H(p)$ since $x$
is unrestricted, and $A(p) \in K^\ast$ since $y \in K$. Therefore,
using $(\kin^f)^* = \kout^f$, we are left with
\[
\textup{min}_{p}  \{ {\mathbbm 1}^T p  \,:\, p \in \kout^f, \, \, z = H(p),\, \, A(p) \in K^{\ast} \}.
\]
Looking back at the definition of $(\Inn_{P}(A))^{\circ}$, we get
 $$(\Inn_{P}(A))^{\circ} = \{ H(p) \,:\, p \in \kout^f, \, \, {\mathbbm 1}^T p \leq 1, \, \, A(p) \in K^{\ast} \}$$ 
which is precisely $\Out_{P^{\circ}}(A)$.
\end{proof}

\subsection{Efficient representation}
There would be no point in defining approximations of $P$ if they
could not be described in a computationally efficient
manner. Remarkably, the orthogonal invariance of the $2$-norm
constraints in the definition of $\Inn_P(\cdot)$ and $\Out_P(\cdot)$
will allow us to compactly represent the approximations via second
order cones (SOC), with a problem size that depends only on the conic
rank $m$ (the affine dimension of $K$), and \emph{not} on the number
of vertices/facets of $P$. This feature is specific to our approximation and
is of key importance, since the dimensions of the codomain of $A^\ast$
and the domain of $B$ can be exponentially large.

We refer to any set defined by a $k \times k$ positive semidefinite
matrix $Q \succeq 0$ and a vector $a \in \RR^k$ of the form
\[
\{ z \in \RR^{k} \,:\, \|Q^{\frac{1}{2}} z \| \leq a^T z\}
\]
as a $k$-second order cone or $\textup{SOC}_k$. Here $Q^{\frac{1}{2}}$
refers to a $k \times k$ matrix that is a square root of $Q \succeq 0$, i.e., 
$Q = (Q^{\frac{1}{2}})^T Q^{\frac{1}{2}}$.  Suppose $M \in
\RR^{p \times k}$ is a matrix with $p \gg k$. Then $\| Mx \|^2 =
x^TM^TMx = x^TQx = \| Q^{\frac{1}{2}}x \|^2 $ where $Q = M^TM$.
 Therefore, the expression $\|Mx\|$ (which corresponds to a 2-norm
condition on $\RR^p$) is equivalent to a 2-norm condition on $\RR^k$
(and $k \ll p$). Notice that such a property is not true, for instance,
for any $\ell_q$ norm for $q \not = 2$.

We use this key orthogonal invariance property of $2$-norms to
represent our approximations efficiently via second-order
cones. Recall from the introduction that a convex set $C \subset
\RR^n$ has a $K$-lift, where $K \subset \RR^m$ is a closed convex
cone, if there is some affine subspace $L \subset \RR^m$ and a linear
map $\pi \,:\, \RR^m \rightarrow \RR^n$ such that $C = \pi(K \cap L)$.
A set of the form $K' = \{ z \in \RR^{k} \,:\, \|Q^{\frac{1}{2}} z
\|_2 \leq a_0 + a^Tz \}$ where the right-hand side is affine can be
gotten by slicing its homogenized second order cone
\[ 
\left\{ (z_0,z) \in \RR^{k+1} \,:\, \left\| \left( \begin{array}{cc} 0 &
  0 \\ 0 & Q^{\frac{1}{2}} \end{array} \right)\left( \begin{array}{c}
  z_0 \\ z \end{array} \right) \right\| \leq a_0z_0+a^T z \right\}
\]
with the affine hyperplane $\{ (z_0,z) \in \RR^{1+k} \,:\, z_0=1 \}$ and
projecting onto the $z$ variables. In other words, $K'$ has a
$\SOC_{k+1}$-lift.

\begin{theorem} 
\label{thm:small lifts}
Let $P \subseteq \RR^n$ be a polytope, $K \subseteq \RR^m$ a closed
convex cone and $A$ and $B$ nonnegative $K^\ast$ and $K$-maps
respectively. Then the convex sets $\Inn_{P}(A)$ and $\Out_P(B)$ have
$K \times \SOC_{(n+m+2)}$-lifts.
\end{theorem}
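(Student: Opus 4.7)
The plan is to take advantage of the alternative description of $\kin^f$ in Remark~\ref{rem:Kinalt} (and the definition of $\kout^v$) to rewrite each of $\Inn_P(A)$ and $\Out_P(B)$ in terms of a single Euclidean-norm inequality, and then to invoke the orthogonal invariance of the $2$-norm to compress that inequality from an ambient dimension of $f$ or $v$ down to $n+m+2$. The underlying fact, already emphasized just before the theorem statement, is that for any matrix $M \in \RR^{p \times N}$ with $p$ possibly much larger than $N$, the constraint $\|Mw\| \leq a^T w$ is equivalent to $\|Q^{\frac{1}{2}} w\| \leq a^T w$ with $Q = M^T M$ of size $N \times N$.

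For $\Inn_P(A)$, Remark~\ref{rem:Kinalt} gives that $x \in \Inn_P(A)$ iff there exist $t \in \RR$ and $y \in K$ such that $\|(t-1)\mathbbm{1} + H^\ast(x) + A^\ast(y)\| \leq t$. The vector on the left is $M(t-1, x, y)$ for the linear map $M : \RR^{1+n+m} \to \RR^f$ sending $(\tau, x, y)$ to $\tau \mathbbm{1} + H^\ast x + A^\ast y$, so orthogonal invariance rewrites the constraint as $\|Q^{\frac{1}{2}}(t-1, x, y)\| \leq t$ with $Q = M^T M$ of size $1+n+m$. To absorb the affine shift $t-1$ I introduce a homogenizing variable $z_0$ together with the affine equation $z_0 = 1$, turning $(t-1, x, y)$ into the linear function $(t - z_0, x, y)$ of $(z_0, t, x, y) \in \RR^{n+m+2}$. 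The resulting constraint is a single $\SOC_{n+m+2}$; coupled with $y \in K$ and the affine slice (and projecting onto the $x$-coordinates) this exhibits $\Inn_P(A) = \pi\bigl((K \times \SOC_{n+m+2}) \cap L \bigr)$.

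For $\Out_P(B)$, use Remark~\ref{rem:alt Out} to write $\Out_P(B) = \{V(z) : \|z\| \leq \mathbbm{1}^T z = 1,\, B(z) \in K\}$. Consider the linear map $L : \RR^v \to \RR^n \oplus \RR \oplus \RR^m$ sending $z$ to $(V(z), \mathbbm{1}^T z, B(z))$. The existence of $z$ with $L(z) = (x, s, k)$ and $\|z\| \leq s$ is equivalent to the affine condition $(x, s, k) \in \textup{range}(L)$ together with $\min \{\|z\| : L(z) = (x, s, k)\} \leq s$. Since the minimum-norm element of the affine fiber $L^{-1}(x, s, k)$ depends linearly on $(x, s, k)$, the squared minimum norm is a PSD quadratic form $(x, s, k)^T Q (x, s, k)$ for an $(n+m+1) \times (n+m+1)$ PSD matrix $Q$, so that $\min \{\|z\| : L(z) = (x, s, k)\} = \|Q^{\frac{1}{2}} (x, s, k)\|$. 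Setting $s = 1$, combining with $k \in K$ and the affine equations cutting out $\textup{range}(L)$, and padding by a single dummy coordinate to reach $\SOC_{n+m+2}$, gives the desired $K \times \SOC_{n+m+2}$-lift.

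The main bookkeeping subtlety will be the product structure of the lift: in each construction the cone variable $y$ (respectively $k$) appears both as the $K$-component of the lift and inside the $\SOC_{n+m+2}$ block, and affine equations in the slice must identify those two copies. Once this identification is set up, the orthogonal-invariance compression and the explicit formulas for the defining matrix $Q^{\frac{1}{2}}$ and the linear functional $a$ of the $\SOC_{n+m+2}$ are straightforward calculations, read off directly from $M$ and from $L$.
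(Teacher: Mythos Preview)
Your treatment of $\Inn_P(A)$ is essentially identical to the paper's: both invoke the alternative description of $\kin^f$ from Remark~\ref{rem:Kinalt}, write the defining inequality as $\|M w\| \leq (\text{affine in }w)$ for an $f \times (n+m+1)$ matrix $M$, compress via $Q = M^T M$, and homogenize by one extra variable to land in $\SOC_{n+m+2}$.

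For $\Out_P(B)$ the two arguments diverge. The paper does not argue directly: it applies the polarity relation $\Out_P(B) = (\Inn_{P^\circ}(B))^\circ$ from Theorem~\ref{thm:polarity}, invokes the $\Inn$ case for $P^\circ$ with cone $K^\ast$, and then uses the general fact from \cite{GPT2012} that the polar of a set with a $K^\ast$-lift has a $K$-lift (together with self-duality of second-order cones). Your route is a genuine direct construction: you parametrize $\Out_P(B)$ by the linear map $L(z) = (V(z), \mathbbm{1}^T z, B(z))$, observe that the minimum-norm element of each fiber is $L^+(x,s,k)$ (linear in the target), so that the existential constraint $\exists z,\ L(z)=(x,s,k),\ \|z\|\leq s$ collapses to the $\SOC_{n+m+1}$ condition $\|Q^{1/2}(x,s,k)\| \leq s$ together with the linear constraint $(x,s,k) \in \mathrm{range}(L)$. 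This is correct, and it has two mild advantages over the paper's argument: it avoids relying on Theorem~\ref{thm:polarity} and the duality-of-lifts fact, and it lands naturally in $\SOC_{n+m+1}$ before padding, which is exactly the one-dimension improvement the paper mentions (without proof) right after its argument. The paper's route, on the other hand, is shorter once the polarity machinery is already in place, and it makes the duality between the two approximations manifest.
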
  

\begin{proof} 
Set $M$ to be the $f \times (n+m+1)$ concatenated matrix $\left[
  H^\ast \,\, A^\ast \, \, \mathbbm{-1}\right]$. Then, using
Definition~\ref{def:inout}, the characterization in
Remark~\ref{rem:Kinalt} and $Q := M^TM$, we have
$$\begin{array}{lcl} 
\Inn_{P}(A) & = & \left\{ x \in \RR^n \,:\, \exists y \in K, \xi \in \RR \textup{ s.t. } 
\left\| M \left(  x^T \,\, y^T \, \, \xi \right)^T  \right\| \leq 1-\xi \right\}\\
& = & \left\{ x \in \RR^n \,:\, \exists y \in K, \xi \in \RR \textup{ s.t. } 
\| Q^{\frac{1}{2}} \left( x^T \,\, y^T \,\, \xi \right)^T \| \leq 1-\xi \right\}.
\end{array}$$

From the discussion above, it follows that $\Inn_P(A)$ is the
projection onto the $x$ variables of points $(z;x_0,x,y,\xi) \in K
\times \SOC_{n+m+2}$ such that $z=y$ and $x_0=1$ and so $\Inn_P(A)$
has a $K \times \SOC_{n+m+2}$-lift. 

The statement for $\Out_P(B)$ follows from Theorem~\ref{thm:polarity}
and the fact that if a convex set $C \subset \RR^n$ with the origin in
its interior has a $K$-lift, then its polar has a $K^\ast$-lift
\cite{GPT2012}. We also use the fact that the dual of a $\SOC_k$ is
another $\SOC_k$ cone.
\end{proof}

Note that the lifting dimension is \emph{independent} of $f$, the
number of facets of $P$, which could be very large compared to $n$. A
slightly modified argument can be used to improve this result by 1,
and show that a $K \times \SOC_{(n+m+1)}$-lift always exists.

\subsection{Approximation quality}
The final question we will address in this section is how good an
approximation $\Inn_P(A)$ and $\Out_P(B)$ are of the polytope $P$. 
We assume that $A^\ast(K)$ is closed which implies (by 
Remark~\ref{rem:A*(K) closed}) that both $\kin^f + A^\ast(K)$ and $\Inn_P(A)$ are closed.

One would like to prove that if we start with a good approximate
$K$-factorization of the slack matrix $S$ of $P$, we would get good
approximate lifts of $P$ from our definitions. This is indeed the case
as we will show. Recall that our approximations $\Inn_P(A)$ and 
$\Out_P(B)$ each depend on only one of the factors $A$ or $B$. 
For this reason, ideally, the goodness of these approximations 
should be quantified using only the relevant factor. 
Our next result presents a bound in this spirit.

\begin{definition} \label{def:mu}
For $x \in \RR^f$, let $\mu(x) := \textup{min} \{ t \geq 0 \,:\, x + t {\mathbbm 1} \in \kin^f + A^\ast (K) \}$.
\end{definition}

Note that $\mu(x)$ is well defined since $\kin^f + A^\ast (K)$ is closed  
and ${\mathbbm 1}$ lies in the interior of  $\kin^f + A^\ast (K)$. Also, $\mu(x) = 0$ for all $x \in \kin^f + A^\ast (K)$. 

\begin{proposition} \label{prop:colcone error bound}
For $\varepsilon = \textup{max}_i (\mu(S(e_i))$, $\frac{P}{1+\varepsilon} \subseteq \Inn_P(A)$.
\end{proposition}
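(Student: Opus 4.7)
The plan is to pass from the convex combination representation of a point $x \in P$ to a membership statement in $\kin^f + A^\ast(K)$, exploiting that the defining condition of $\Inn_P(A)$ can be rewritten as $\mathbbm{1} - H^\ast(x) \in \kin^f + A^\ast(K)$. I would first observe that $\mu$ is well-defined: since $\kin^f + A^\ast(K)$ is closed (by our standing assumption and Remark~\ref{rem:A*(K) closed}) and contains $\mathbbm{1}$ in its interior (as $\mathbbm{1} \in \textup{int}(\kin^f)$), the minimum in Definition~\ref{def:mu} is attained and finite.

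Next I would fix $x \in P$, write it as a convex combination $x = \sum_j \lambda_j p_j$ of vertices with $\lambda_j \geq 0$ and $\sum_j \lambda_j = 1$. Using $H^\ast(p_j) = \mathbbm{1} - S(e_j)$, a direct computation gives
\[
\mathbbm{1} - H^\ast\!\left(\frac{x}{1+\varepsilon}\right) = \frac{\varepsilon}{1+\varepsilon}\,\mathbbm{1} + \frac{1}{1+\varepsilon}\sum_j \lambda_j\, S(e_j) = \frac{1}{1+\varepsilon}\sum_j \lambda_j\bigl(S(e_j) + \varepsilon\mathbbm{1}\bigr).
\]
So the goal reduces to showing that every vector $S(e_j) + \varepsilon\mathbbm{1}$ lies in the convex cone $\kin^f + A^\ast(K)$, since then the convex combination (and its nonnegative scaling by $1/(1+\varepsilon)$) will lie there as well.

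For this key step, by definition of $\varepsilon$ we have $\mu(S(e_j)) \leq \varepsilon$, so there exists $y_j \in K$ and $c_j \in \kin^f$ with $S(e_j) + \mu(S(e_j))\mathbbm{1} = c_j + A^\ast(y_j)$. Adding the slack $(\varepsilon - \mu(S(e_j)))\mathbbm{1}$, which lies in $\kin^f$ because $\mathbbm{1} \in \kin^f$ and $\kin^f$ is a cone, yields
\[
S(e_j) + \varepsilon\mathbbm{1} = \bigl(c_j + (\varepsilon - \mu(S(e_j)))\mathbbm{1}\bigr) + A^\ast(y_j) \in \kin^f + A^\ast(K),
\]
as required. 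Combining, $\mathbbm{1} - H^\ast(x/(1+\varepsilon)) \in \kin^f + A^\ast(K)$, i.e., $x/(1+\varepsilon) \in \Inn_P(A)$.

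Since no step is particularly delicate, I do not anticipate a main obstacle beyond the bookkeeping above; the one subtlety worth flagging separately is the attainment of the minimum in $\mu$, which is exactly why closedness of $\kin^f + A^\ast(K)$ was established beforehand. As a byproduct (needed for the strong duality invoked in the proof of Theorem~\ref{thm:polarity}), taking $x=0$ gives $\mathbbm{1} - H^\ast(0) = \mathbbm{1} \in \textup{int}(\kin^f)$, so a small $\RR^n$-neighborhood of $0$ lies in $\Inn_P(A)$, witnessing that the inner approximation has nonempty interior.
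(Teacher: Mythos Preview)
Your proof is correct and follows essentially the same approach as the paper's: both hinge on showing $S(e_j) + \varepsilon\mathbbm{1} \in \kin^f + A^\ast(K)$ and then dividing by $1+\varepsilon$. The only cosmetic difference is that the paper checks just the vertices $p_j/(1+\varepsilon)$ and invokes convexity of $\Inn_P(A)$, whereas you treat a general $x \in P$ directly and push the convex combination through the cone $\kin^f + A^\ast(K)$; the key step and the underlying idea are identical.
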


\begin{proof}
It suffices to show that $\frac{p}{1+\varepsilon} \in \Inn_P(A)$ where $p = V(e_i)$ is a vertex of $P$.
By the definition of $\varepsilon$, we have that $\varepsilon {\mathbbm 1} + S(e_i) \in \kin^f + A^\ast(K)$ and hence, 
$\varepsilon {\mathbbm 1} + {\mathbbm 1} - H^\ast V(e_i) \in \kin^f + A^\ast(K)$. Therefore, 
${\mathbbm 1} - H^\ast(\frac{p}{1+\varepsilon}) \in \kin^f + A^\ast(K)$ and hence, 
$\frac{p}{1+\varepsilon} \in \Inn_P(A)$.
\end{proof}

From this proposition one sees that the approximation factor improves as $A^\ast(K)$ gets bigger.
While  geometrically appealing, this bound is not very convenient from a computational viewpoint. Therefore, we now 
write down a simple, but potentially weaker, result based on an alternative error measure $\xi(\cdot)$ defined as
follows.

\begin{definition} \label{def:xi}
For $x \in \RR^f$, define $ \xi(x) := \min \, \{ t \geq 0 \,: \, x + t {\mathbbm 1} \in \kin^f \}$.
\end{definition}
Again, $\xi(\cdot)$ is well-defined since $\kin^f$ is closed and ${\mathbbm 1}$ is in its interior.
Also, $\mu(x) \leq \xi(x)$ for all $x \in \RR^f$.  Setting $t = \|x\|$ in Remark~\ref{rem:Kinalt}, we see that 
for any $x \in \RR^f$, $x + \|x\| \cdot {\mathbbm 1} \in \kin^f$  which implies that $\xi(x) \leq \|x\|$. We also remark 
that $\xi(x)$ can be computed easily as follows.

\begin{remark}\label{rem:formula for xi}
For $0 \neq x \in \RR^f$, $f \geq 2$,   
\[
\xi(x) = \textup{max} \left\{0, \,\,\|x\| \cdot \alpha\left(\frac{{\mathbbm 1}^T x}{\sqrt{f}\|x\|}\right) \right\},
\]
where $\alpha(s) := (\sqrt{(f-1)(1-s^2)}-s)/\sqrt{f}$, and $\xi(0) = 0$.
\end{remark}

Using $\xi(\cdot)$ we will now provide a more convenient version of
Proposition~\ref{prop:colcone error bound} based only on the
factorization error.

\begin{lemma} \label{lem:cone eigenvalue}
Let $A:(\RR^f)^\ast\rightarrow (\RR^m)^\ast$ be a nonnegative
$K^\ast$-map and $B: \RR^v \rightarrow \RR^m$ be a nonnegative
$K$-map. Let $\Delta=S - A^\ast \circ
B$. Then,
$\frac{1}{1+\varepsilon} P \subseteq \Inn_{P}(A)$, for $\varepsilon = 
\textup{max}_i \, \xi(\Delta(e_i))$.
\end{lemma}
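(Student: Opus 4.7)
The plan is to reduce this to Proposition~\ref{prop:colcone error bound} by showing that the pointwise error measure $\mu(S(e_i))$ that appears there is bounded above by the more tractable quantity $\xi(\Delta(e_i))$, and then monotonicity in the scaling factor will finish the argument.

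First I would observe that by hypothesis $S = A^\ast \circ B + \Delta$, so for each vertex index $i$ we can write $S(e_i) = A^\ast(B(e_i)) + \Delta(e_i)$. Since $B$ is a nonnegative $K$-map, $B(e_i) \in K$, and therefore $A^\ast(B(e_i)) \in A^\ast(K)$. Setting $t_i := \xi(\Delta(e_i))$, the definition of $\xi$ gives $\Delta(e_i) + t_i {\mathbbm 1} \in \kin^f$. Adding the element $A^\ast(B(e_i)) \in A^\ast(K)$ yields
\[
S(e_i) + t_i {\mathbbm 1} \;=\; A^\ast(B(e_i)) + \bigl(\Delta(e_i) + t_i {\mathbbm 1}\bigr) \;\in\; A^\ast(K) + \kin^f.
\]
By Definition~\ref{def:mu} this proves $\mu(S(e_i)) \leq t_i = \xi(\Delta(e_i))$ for every $i$.

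Taking the maximum over $i$ gives $\varepsilon' := \max_i \mu(S(e_i)) \leq \max_i \xi(\Delta(e_i)) = \varepsilon$. Proposition~\ref{prop:colcone error bound} then yields $\frac{1}{1+\varepsilon'} P \subseteq \Inn_P(A)$. Since $0$ lies in the interior of $P$ and $\frac{1}{1+\varepsilon} \leq \frac{1}{1+\varepsilon'}$, convex combination with the origin shows $\frac{1}{1+\varepsilon} P \subseteq \frac{1}{1+\varepsilon'} P$, and the desired containment follows.

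There is no real obstacle here: the lemma is essentially a repackaging of Proposition~\ref{prop:colcone error bound} in which we trade the (tight but hard to compute) measure $\mu$ for the (looser but explicit) measure $\xi$. The only subtle point is ensuring we may actually invoke Proposition~\ref{prop:colcone error bound}, which needs $\kin^f + A^\ast(K)$ to be closed so that the $\min$ in Definition~\ref{def:mu} is attained; this is covered by the standing assumption (see Remarks~\ref{rem:A*(K) closed} and~\ref{rem:assume inn closed}) that $A^\ast(K)$ is closed.
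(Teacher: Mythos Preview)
Your proof is correct and follows essentially the same approach as the paper: both arguments show $\mu(S(e_i)) \leq \xi(\Delta(e_i))$ by exhibiting $A^\ast(B(e_i)) \in A^\ast(K)$ as the particular element that brings $S(e_i) + t{\mathbbm 1}$ into $\kin^f + A^\ast(K)$, and then invoke Proposition~\ref{prop:colcone error bound}. Your write-up is slightly more explicit about the monotonicity step $\frac{1}{1+\varepsilon}P \subseteq \frac{1}{1+\varepsilon'}P$, which the paper leaves implicit.
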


\begin{proof}
Note that $\mu(S(e_i)) = \min \{ t \geq 0 \,:\, \exists \,\, u \in A^\ast(K) \textup{ s.t. } S(e_i)-u + t {\mathbbm 1} \in \kin^f \}$. 
Since $B(e_i) \in K$, we have that $A^\ast(B(e_i)) \in A^\ast(K)$. Therefore, 
\[
\mu(S(e_i))  \leq \min\{t \geq 0 \,: \, \underbrace{S(e_i) -A^\ast(B(e_i))}_{\Delta(e_i)} + t {\mathbbm 1} \in \kin\} = \xi(\Delta(e_i)).
\]
\end{proof}

%
%

We immediately get our main result establishing the connection between
the quality of the factorization and the quality of the
approximations. For simplicity, we state it using the simplified upper
bound $\xi(x) \leq \|x\|$.

\begin{proposition}\label{prop:errorbounds}
Let $A:(\RR^f)^\ast\rightarrow (\RR^m)^\ast$ be a nonnegative
$K^\ast$-map and $B: \RR^v \rightarrow \RR^m$ be a nonnegative
$K$-map. Let $\Delta :=S - A^\ast \circ B$ be the factorization error. Then,
\begin{enumerate}
\item $\frac{1}{1+\varepsilon} P \subseteq \Inn_{P}(A)$, for $\varepsilon = \| \Delta \|_{1,2}$;
\item $\Out_{P}(B) \subseteq (1+\varepsilon) P$, for $\varepsilon  = \| \Delta \|_{2,\infty}$,
\end{enumerate}
where $\| \Delta \|_{1,2} = \max_i \| \Delta(e_i) \|$ is the induced
$\ell_1,\ell_2$ norm and $\| \Delta \|_{2,\infty} = \max_i \|
\Delta^\ast(e_i^\ast) \|$ is the induced $\ell_2,\ell_\infty$ norm of
the factorization error.
\end{proposition}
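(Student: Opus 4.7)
Part (1) should follow essentially for free from Lemma~\ref{lem:cone eigenvalue}. That lemma gives $\frac{1}{1+\varepsilon'} P \subseteq \Inn_P(A)$ with $\varepsilon' = \max_i \xi(\Delta(e_i))$, and we already observed that $\xi(y) \leq \|y\|$ for every $y \in \RR^f$ (obtained by setting $t = \|y\|$ in the alternative description of $\kin^f$ in Remark~\ref{rem:Kinalt}). Therefore $\varepsilon' \leq \max_i \|\Delta(e_i)\| = \|\Delta\|_{1,2} = \varepsilon$, and since the inclusion $\frac{1}{1+\cdot}P \subseteq \Inn_P(A)$ weakens monotonically as $\varepsilon$ grows, part (1) drops out immediately.

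For part (2), the plan is to reduce to part (1) via polarity. By Theorem~\ref{thm:polarity}, $(\Out_P(B))^\circ = \Inn_{P^\circ}(B)$, and polarity reverses inclusions with $(\lambda P)^\circ = \frac{1}{\lambda} P^\circ$, so proving $\Out_P(B) \subseteq (1+\varepsilon) P$ is equivalent to proving $\frac{1}{1+\varepsilon} P^\circ \subseteq \Inn_{P^\circ}(B)$. In the polar world, facets and vertices exchange roles, so $B \colon \RR^v \to \RR^m$ plays the role of the $K$-map indexed by facets of $P^\circ$, and $A$ plays the role of the vertex-indexed map; the slack matrix of $P^\circ$ is (the transpose/adjoint of) $S$, and its candidate factorization is $B^\ast \circ A$, with factorization error $S^\ast - B^\ast \circ A = \Delta^\ast$.

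Applying part (1) to $P^\circ$ with this data yields $\frac{1}{1+\tilde\varepsilon} P^\circ \subseteq \Inn_{P^\circ}(B)$ for $\tilde\varepsilon = \|\Delta^\ast\|_{1,2} = \max_i \|\Delta^\ast(e_i^\ast)\| = \|\Delta\|_{2,\infty} = \varepsilon$. Taking polars and using Theorem~\ref{thm:polarity} once more produces the desired $\Out_P(B) \subseteq (1+\varepsilon) P$.

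The main thing to be careful about is bookkeeping in the polarity reduction: one must verify that the conic structure transports correctly (exchanging $K$ and $K^\ast$, swapping the roles of the two factors, and identifying the slack operator of $P^\circ$ with $S^\ast$), and that the $\ell_1,\ell_2$ norm of the transposed error indeed matches the $\ell_2,\ell_\infty$ norm of the original. Once these identifications are explicit, there is no substantive computation beyond what part (1) already provides.
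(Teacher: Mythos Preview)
Your proposal is correct and mirrors the paper's own proof, which simply observes that the two statements are equivalent by Theorem~\ref{thm:polarity} and then invokes Lemma~\ref{lem:cone eigenvalue} together with the bound $\xi(x)\le\|x\|$ noted just before the proposition. You have merely spelled out the polarity bookkeeping (the swap of $K$ and $K^\ast$, the identification of $S_{P^\circ}$ with $S^\ast$, and the equality $\|\Delta^\ast\|_{1,2}=\|\Delta\|_{2,\infty}$) that the paper leaves implicit.
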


\begin{proof}
By Theorem~\ref{thm:polarity}, the two statements are equivalent.
The proof now follows from Lemma~\ref{lem:cone eigenvalue}.
\end{proof}

This means that if we start with $A$ and $B$ forming a
$K$-factorization of a nonnegative $S'$ which is close to the true
slack matrix $S$, we get a $(1+\|S-S'\|_{1,2})$-approximate inner
approximation of $P$, as well as a
$(1+\|S-S'\|_{2,\infty})$-approximate outer approximation of
$P$. Thus, good approximate factorizations of $S$ do indeed lead to
good approximate lifts of $P$.

\begin{example} \label{ex:inclusions}
Consider the square given by the inequalities $1 \pm x \geq 0$ and $1
\pm y \geq 0$ with vertices $(\pm 1, \pm 1)$.  By suitably ordering
facets and vertices, this square has slack matrix
$$S=\left(
\begin{array}{cccc}
2 & 2 & 0 & 0\\
0 & 2 & 2 & 0\\
0 & 0 & 2 & 2\\
2 & 0 & 0 & 2
\end{array}
\right).$$ Let $A:(\RR^4)^\ast\rightarrow (\RR^2)^\ast$ and
$B\,:\,\RR^4 \rightarrow \RR^2$ be the nonnegative maps given by the
matrices
$$A=\left(
\begin{array}{cccc}
4/3 & 4/3 & 0 & 0\\
0 & 0 & 4/3 & 4/3
\end{array}
\right) \,\, \textup{ and } \,\, B=\left(
\begin{array}{cccc}
1 & 1 & 1 & 0\\
1 & 0 & 1 & 1
\end{array}
\right).$$
Then $A$ and $B$ are nonnegative $\RR^2_+ = (\RR^2_+)^\ast$ maps and
$$A^T B = \left(
\begin{array}{cccc}
4/3 & 4/3 & 4/3 & 0\\
4/3 & 4/3 & 4/3 & 0\\
4/3 & 0 & 4/3 & 4/3\\
4/3 & 0 & 4/3 & 4/3
\end{array}
\right).$$ 
It is easy to check that $\|S-A^T B\|_{1,2}=\frac{2}{3} \sqrt{10}$ while
$\|S-A^T B\|_{2,\infty}=2\sqrt{\frac{2}{3}}$. So by
Proposition~\ref{prop:errorbounds} this implies
that $$\frac{1}{1+\frac{2}{3}\sqrt{10}}P \subseteq \Inn_P(A) \subseteq P
\subseteq \Out_P(B) \subseteq (1 + 2\sqrt{\textstyle\frac{2}{3}})P.$$

If we use instead Lemma~\ref{lem:cone eigenvalue}, we can get the slightly better quality bounds
 $$\frac{1}{\frac{4}{3}+\sqrt{3}}P \subseteq \Inn_P(A) \subseteq P \subseteq \Out_P(B) \subseteq (1 + \sqrt{2})P.$$
 
Finally, if we use directly Proposition~\ref{prop:colcone error
  bound}, it is possible in this case to compute explicitly the true
bounds
 $$\frac{1}{\sqrt{3}}P \subseteq \Inn_P(A) \subseteq P \subseteq \Out_P(B) \subseteq \sqrt{3}P.$$
In Figure \ref{Fig:epsapprox} we can see the relative quality of all these bounds.

\begin{figure}
\centering
\includegraphics[scale=0.4]{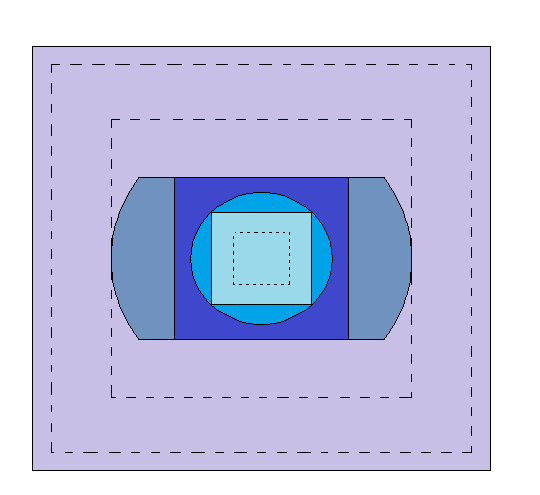}
\caption{$\Inn_{P}(A)$, $P$ and $\Out_P(B)$, as well as all the guaranteed approximations.}
\label{Fig:epsapprox}
\end{figure}

\end{example}



\section{Special Cases} 
\label{sec:cases}

In this section we relate our inner approximation to \emph{Dikin's
  ellipsoid}, a well known inner approximation to a polytope that
arises in the context of interior point methods.  We also compute
$\Inn_P(A)$ and $\Out_P(B)$ from a nonnegative factorization of the
closest rank one approximation of the slack matrix of $P$. As we will
see, these are the two simplest possible approximations of a polytope,
and correspond to specific choices for the factors $A$ and $B$.

\subsection{Dikin's ellipsoid}
Recall that $\Inn_{P}(0)$ is an ellipsoidal  inner
approximation of $P$ that is intrinsic to $P$ and does not depend on any approximate
factorization of the slack matrix of $P$ through any cone. A commonly used inner approximation of a polytope is 
Dikin's ellipsoid defined as follows.
Let $P = \{ x \in \RR^n \,:\, H^Tx \leq {\mathbbm 1} \}$ be a polytope as before and let $h_1, \ldots, h_f$ be the columns of $H$. Define
\[
\Gamma_P(x) := \sum_{i=1}^f \frac{h_i h_i^T}{(1-\langle h_i,x \rangle)^2},
\]
which is the Hessian of the standard logarithmic barrier function
$\psi(x):=-\sum_{i=1}^f \log (1-\langle h_i,x \rangle)$ associated to
$P$. If $x_0$ is in the interior of $P$, then the Dikin ellipsoid
centered at $x_0$ and of radius $r$, is the set
\[
D_{x_0}^r :=\{x \in \RR^n \, : \, (x-x_0)^T \Gamma_P(x_0)(x-x_0) \leq r^2 \}.
\]
It can be checked that $D_{x_0}^1 \subseteq P$ (see Theorem 2.1.1 in
\cite{NN}). Further, since Dikin's ellipsoid is invariant under
translations, we may assume that $x_0 = 0$,  and then \
\[
D_0^1=
\left\{x \in \RR^n \, : \, x^T \left(\sum_{i=1}^f {h_i h_i^T}\right) x \leq 1 \right\} =
\left\{ x \in \RR^n \,: \,\ \|H^Tx\| \leq 1 \right\}.
\] 
Recall that 
\begin{align*}
\Inn_P(0) &= \left\{ x \in \RR^n \, : \, {\mathbbm 1} - H^Tx \in \kin^f \right\}
= \left\{ x \in \RR^n \, : \, \exists t \in
\RR \mbox{ s.t. } \| (t-1) {\mathbbm 1} + H^T x\| \leq t \right\},
\end{align*}
where we used the characterization of $\kin^f$ given in
Remark~\ref{rem:Kinalt}. Choosing $t=1$, we see that $D_0^1 \subseteq
\Inn_P(0) \subseteq P$. This inclusion is implicit
in the work of Sturm and Zhang \cite{SturmZhang}. \marginpar{check}

If the origin is also the analytic center of $P$ (i.e., the minimizer
of $\psi(x)$), then we have that $P \subseteq \sqrt{f(f-1)}D_{0}^1$
where $f$ is the number of inequalities in the description of $P$; see
\cite{Sonnevend} and \cite[Section 8.5.3]{BoydVandenberghe}. Also, in this
situation, the first order optimality condition on $\psi(x)$ gives
that $\sum_{i=1}^f h_i = 0$, and as a consequence, $f= \sum_{i=1}^f
(1-\langle h_i, p \rangle)$ for all vertices $p$ of $P$. In other
words, every column of the slack matrix sums to $f$.
This implies an analogous (slightly stronger) containment result for
$\Inn_P(0)$.

\begin{corollary}
If the origin is the analytic center of the polytope $P$, then 
\[
\Inn_P(0) \subseteq P \subseteq (f-1) \, \Inn_P(0),
\]
furthermore, if $P$ is centrally symmetric
\[
\Inn_P(0) \subseteq P \subseteq \sqrt{f-1} \, \Inn_P(0).
\]
\end{corollary}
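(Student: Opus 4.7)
The plan is to make the inclusion $\Inn_P(0) \subseteq P$ (already noted via Dikin) quantitative in the reverse direction, by rewriting $\Inn_P(0)$ explicitly as a scalar multiple of $D_0^1$ under the analytic-center hypothesis, and then combining with the known bounds relating $P$ to $D_0^1$.

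First, starting from the description
\[
\Inn_P(0) = \left\{ x \in \RR^n \,:\, \sqrt{f-1}\,\|\mathbbm{1} - H^T x\| \leq \mathbbm{1}^T(\mathbbm{1} - H^T x)\right\},
\]
I would use the analytic-center condition $\sum_{i=1}^f h_i = H \mathbbm{1} = 0$, noted just before the corollary statement, to simplify $\mathbbm{1}^T H^T x = (H\mathbbm{1})^T x = 0$. Then $\mathbbm{1}^T(\mathbbm{1} - H^T x) = f$ and
\[
\|\mathbbm{1} - H^T x\|^2 = f - 2(H\mathbbm{1})^T x + \|H^T x\|^2 = f + \|H^T x\|^2.
\]
Substituting into the defining inequality and simplifying gives
\[
\Inn_P(0) = \left\{ x \in \RR^n \,:\, \|H^T x\|^2 \leq \tfrac{f}{f-1}\right\} = \sqrt{\tfrac{f}{f-1}}\; D_0^1,
\]
so $\Inn_P(0)$ is itself an explicit scaling of the Dikin ellipsoid.

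For the general bound, I would invoke the Sonnevend containment $P \subseteq \sqrt{f(f-1)}\, D_0^1$ recalled in the excerpt, so that
\[
P \subseteq \sqrt{f(f-1)}\, D_0^1 = \sqrt{f(f-1)}\cdot \sqrt{\tfrac{f-1}{f}}\;\Inn_P(0) = (f-1)\,\Inn_P(0),
\]
which is the first claimed inclusion.

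For the centrally symmetric case, I would replace Sonnevend's bound by an elementary argument tailored to the symmetry. When $P$ is centrally symmetric and the origin is its analytic center (which is then the center of symmetry), the facet normals pair up as $\pm h_i$ for $i = 1, \ldots, f/2$, and every vertex $p$ satisfies $|\langle h_i, p\rangle| \leq 1$ for each $i$. Hence
\[
p^T\Bigl(\sum_{i=1}^f h_i h_i^T\Bigr) p = 2 \sum_{i=1}^{f/2} \langle h_i, p\rangle^2 \leq f,
\]
so $P \subseteq \sqrt{f}\, D_0^1$ by convexity. Combining with the scaling relation between $D_0^1$ and $\Inn_P(0)$ derived above yields
\[
P \subseteq \sqrt{f}\, D_0^1 = \sqrt{f}\cdot \sqrt{\tfrac{f-1}{f}}\;\Inn_P(0) = \sqrt{f-1}\;\Inn_P(0),
\]
completing the argument. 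The only step that requires more than a direct computation is the symmetric bound $P \subseteq \sqrt{f}\, D_0^1$, but the pairing of facet normals reduces it to the trivial estimate $|\langle h_i, p\rangle| \leq 1$; all remaining steps are rearrangements of the definition of $\Inn_P(0)$ using $H\mathbbm{1} = 0$.
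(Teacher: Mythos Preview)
Your proof is correct, but it takes a different route from the paper's. The paper proves both inclusions via its own error-bound machinery (Lemma~\ref{lem:cone eigenvalue} and the formula for $\xi$ in Remark~\ref{rem:formula for xi}): with $A=0$ one has $\Delta=S$, and the paper bounds $\xi(S(e_i))$ directly. For the general case it uses $\|S(e_i)\|\le \mathbbm{1}^T S(e_i)=f$ and the monotonicity of $\alpha$ to get $\xi(S(e_i))\le f-2$; for the symmetric case it computes $\|S(e_i)\|=\sqrt{f+\|H^T p_i\|^2}$ from the facet pairing, obtains $\xi(S(e_i))=\sqrt{(f-1)/f}\,\|H^T p_i\|-1$, and finishes with $\|H^T p_i\|\le\sqrt{f}$. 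You instead first establish the identity $\Inn_P(0)=\sqrt{f/(f-1)}\,D_0^1$ (which the paper records only in the remark \emph{after} the corollary) and then reduce both inclusions to containments of $P$ in scalings of $D_0^1$: Sonnevend's bound $P\subseteq\sqrt{f(f-1)}\,D_0^1$ for the first, and your direct estimate $\|H^T p\|^2\le f$ for the second. Your argument is more geometric and avoids the $\xi$ formula entirely, at the cost of importing Sonnevend's result for the first inclusion; the paper's argument is fully self-contained and showcases the $\xi$ machinery developed in Section~\ref{sec:approx}. In the symmetric case both proofs rest on the same key observation $|\langle h_j,p_i\rangle|\le 1$, just packaged differently.
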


\begin{proof} 
This follows from Lemma~\ref{lem:cone eigenvalue} and Remark~\ref{rem:formula for xi}, 
 by using the fact
that if $S$ is the slack matrix of~$P$ then $\|S(e_i) \| \leq
{\mathbbm 1}^TS(e_i) = f$. In this case, for $w = S(e_i)$, we have
$\frac{\mathbbm{1}^T w}{\sqrt{f} \|w\|} \geq \frac{1}{\sqrt{f}}$ and
thus, since $\alpha(s)$ is decreasing for $s \geq
-\frac{1}{\sqrt{f}}$, we get $\xi(w) = \|w\| \alpha(\frac{\mathbbm{1}^T
  w}{\sqrt{f} \|w\|}) \leq \|w\| \alpha(\frac{1}{\sqrt{f}}) \leq f
\cdot \frac{f-2}{f} = f-2 $, from where the first result follows.

For the second result, note that if $P$ is centrally symmetric, for
every facet inequality $1-\langle h_j, x \rangle \geq 0$ we have also
the facet inequality $1+ \langle h_j, x \rangle \geq 0$, which implies
\[
\|S(e_i)\|=\sqrt{\frac{1}{2} \sum_{j=1}^f \left[(1- \langle h_j, p_i \rangle)^2 + (1+ \langle h_j, p_i \rangle)^2 \right]}=\sqrt{f+\|H^T p_i\|^2}.
\]
Using this fact together with ${\mathbbm 1}^TS(e_i) = f$, we get that
$\xi(S(e_i))=\sqrt{\frac{f-1}{f}}\|H^T p_i\|-1$. Since $\mathbbm{1}-H^T p_i$ and $\mathbbm{1}+H^T p_i$ are
both nonnegative, all entries in $H^T p_i$ are smaller than one in absolute
value. Hence, $\|H^T p_i\| \leq \sqrt{f}$, concluding the proof.
\end{proof}

\begin{remark}
It was shown by Sonnevend \cite{Sonnevend} that when $x_0$ is the analytic center of $P$, 
$ \sqrt{{f}/(f-1)} D_{x_0}^1  \subseteq P$. By affine invariance of the result, we may assume that $x_0=0$, and in combination with the previously mentioned result from \cite{Sonnevend}, we get 
$$\sqrt{{f}/(f-1)} D_{0}^1  \subseteq P \subseteq \sqrt{f(f-1)} D_0^1.$$
This implies that the {\em Sonnevend ellipsoid},  $\sqrt{{f}/(f-1)} D_{0}^1$,  a slightly dialated version of the Dikin ellipsoid $D_0^1$, is also contained in $P$. 
Since $D_0^1 = \{ x \in \RR^n \,:\, \|H^Tx\| \leq {\mathbbm 1} \}$, we get that $\sqrt{{f}/(f-1)} D_{0}^1 = \{ x \in \RR^n \,:\, \| H^Tx\| \leq \sqrt{{f}/(f-1)} \}$. 
On the other hand, 
$$\begin{array}{lcl}
\Inn_P(0) & = & \{ x \in \RR^n \,:\, {\mathbbm 1} - H^Tx \in \kin^f \}\\
&  = & \{ x \in \RR^n \,:\, \sqrt{f-1} \|{\mathbbm 1} - H^Tx \| \leq {\mathbbm 1}^T({\mathbbm 1} - H^Tx) \}\\
& = & \{ x \in \RR^n \,:\, (f-1) \|H^Tx\|^2 \leq f - 2(\sum h_i)^Tx + ((\sum h_i)^Tx)^2 \}.
\end{array}.$$ 
When the origin is the analytic center of $P$, $\sum h_i = 0$, and $\Inn_P(0) =  \sqrt{{f}/(f-1)} D_{0}^1$, which makes the containment $\sqrt{{f}/{f-1}} D_{0}^1  \subseteq P$ at most as strong as $\Inn_P(0) \subseteq P$. The containment  $\Inn_P(0) \subseteq P$ holds whenever the origin is in $P$ while the Sonnevend containments require the origin to be the analytic center of $P$.
\end{remark}

\subsection{Singular value decomposition}
Let $P$ be a polytope as before and suppose
$S=U \Sigma V^T$ is a singular value decomposition of the slack matrix $S$ of $P$ 
with $U$ and $V$ orthogonal matrices and $\Sigma$ a diagonal matrix with the singular values of $S$ on the diagonal. 
By the Perron-Frobenius theorem, the leading singular vectors of a nonnegative
matrix can be chosen to be nonnegative. Therefore, if $\sigma$ is the
largest singular value of $S$, and $u$ is the first column of $U$ and 
$v$ is the first column of $V$ then $A=\sqrt{\sigma}u^T$ and $B=\sqrt{\sigma}v^T$ are two
nonnegative matrices. By the Eckart-Young
theorem, the matrix $A^TB = \sigma u v^T$ is the closest rank one matrix (in Frobenius norm) to 
$S$. We can also view $A^TB$ as an approximate $\RR_+$-factorization of $S$ and thus look at the 
approximations $\Inn_P(A) =:\Inn_{P}^{\textrm{sing}}$ and $\Out_P(B) =: \Out_{P}^{\textrm{sing}}$ and hope  that in some cases they may offer good compact approximations to $P$. Naturally,
these are not as practical as $\Inn_P(0)$ and $\Out_P(0)$ since to
compute them we need to have access to a complete slack matrix of $P$ and its  leading singular vectors.
We illustrate these approximations on an example.

\begin{example}
Consider the quadrilateral with vertices $(1,0), (0,2), (-1,0)$ and $(0,-1/2)$. This polygon has slack matrix
$$S=\left[
\begin{array}{cccc}
0 & 5 & 2 & 0 \\
0 & 0 & 2 & 5/4 \\
2 & 0 & 0 & 5/4 \\
2 & 5 & 0 & 0
\end{array}
\right],$$ 
and by computing a singular value decomposition and
proceeding as outlined above, we obtain the $1 \times 4$ matrices 
$$A=[1.9130,0.1621,0.1621,1.9130] \textup{ and } 
B=[0.5630,2.5951,0.5630,0.0550],$$ verifying
$$S'=A^TB=\left[
\begin{array}{cccc}
    1.0770  &  4.9644 &   1.0770 &   0.1051\\
    0.0912  &  0.4206  &  0.0912 &  0.0089\\
    0.0912  &  0.4206 &   0.0912  &  0.0089\\
    1.0770  &  4.9644 &  1.0770  &  0.1051
    \end{array}
\right].$$ Computing $\Inn_{P}^{\textrm{sing}}$ and
$\Out_{P}^{\textrm{sing}}$ we get the inclusions illustrated in
Figure~\ref{Fig:quadincl}. Notice how these rank one approximations
from leading singular vectors use the extra information to improve on
the trivial approximations $\Inn_P(0)$ and $\Out_P(0)$. Notice also
that since $$\Out_P^{\textrm{sing}} = \Out_P(B) = \{ V(z) \,:\, z \in
\RR^4, \, \|z\| \leq {\mathbbm 1}^Tz \leq 1, \, Bz \geq 0 \}, $$ and
$Bz \geq 0$ is a single linear inequality, $\Out_P^{\textrm{sing}}$ is
obtained from $\Out_P(0)$ by imposing a new linear inequality. By
polarity, $\Inn_{P}^{\textrm{sing}}$ is the convex hull of $\Inn_P(0)$
with a new point.

\begin{figure}
\centering
\includegraphics[scale=0.35]{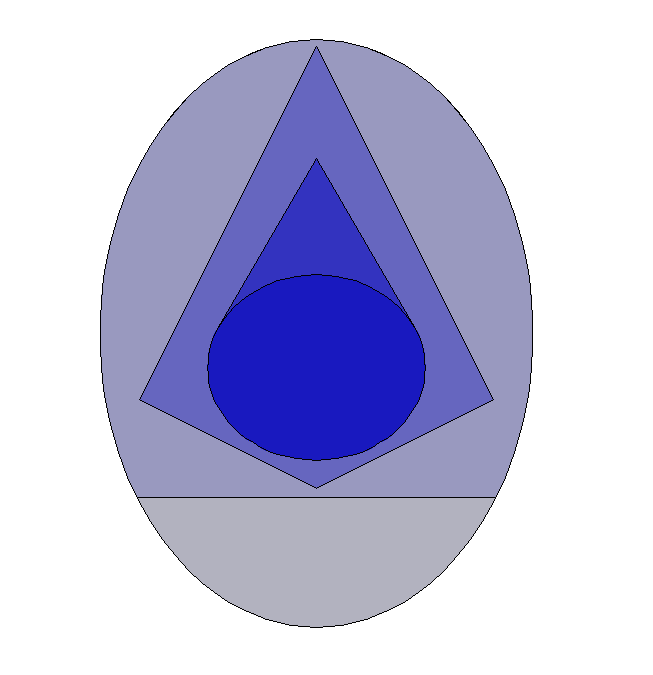}
\caption{$\Inn_{P}(0) \subseteq \Inn_{P}^{\textrm{sing}} \subseteq P \subseteq \Out_{P}^{\textrm{sing}} \subseteq \Out_P(0)$.}
\label{Fig:quadincl}
\end{figure}

\end{example}

\section{Nested polyhedra and Generalized Slack Matrices}
\label{sec:twomatrices}

In this section, we consider approximate factorizations of generalized
slack matrices and what they imply in terms of approximations to a
pair of nested polyhedra. The results here can be seen as
generalizations of results in Section~2.

Let $P \subset \RR^n$ be a polytope with vertices $p_1, \ldots, p_v$ and the origin in its interior,  
and $Q \subset \RR^n$ be a polyhedron with facet inequalities $\langle h_j, x \rangle 
\leq 1$, for $j=1,\ldots,f$, such that $P \subseteq Q$. The {\em slack
  matrix} of the pair $P,Q$ is the $f \times v$ matrix $S_{P,Q}$ whose
$(i,j)$-entry is $1 - \langle h_i, p_j \rangle$.  In the language of operators from Section~2,
$S_{P,Q}$ can be thought of as an operator from $\RR^v
\rightarrow \RR^f$ defined as $S_{P,Q}(x) = (\mathbbm{1}_{f \times v} -
H_Q^\ast \circ V_{P})(x)$ where $V_P$ is the vertex operator of $P$
and $H_Q$ is the facet operator of $Q$. Every nonnegative
matrix can be interpreted as such a generalized slack matrix after a
suitable rescaling of its rows and columns, possibly with some
extra rows and columns representing redundant points in $P$ and
redundant inequalities for $Q$.

\subsection{Generalized slack matrices and lifts}
Yannakakis' theorem about $\RR^m_+$-lifts of polytopes can be extended
to show that $S_{P,Q}$ has a $K$-factorization (where $K \subset
\RR^m$ is a closed convex cone), if and only if there exists a convex
set $C$ with a $K$-lift such that $P \subseteq C \subseteq Q$. (For a
proof in the case of $K = \RR^m_+$, see \cite[Theorem
  1]{BraunFioriniPokuttaSteurer}, and for $K = \PSD^m$ see
\cite{GRT2013}.  Related formulations in the polyhedral situation also
appear in \cite{GillisGlineur, Pashkovich}.)  For an arbitrary cone $K$,  
there is a requirement that the $K$-lift of $C$ contains an interior point of $K$ \cite[Theorem 1]{GPT2012},
but this is not needed for nonnegative orthants or psd cones, and we 
ignore this subtlety here.

Two natural convex sets with $K$-lifts that are nested between $P$ and
$Q$ can be obtained as follows.  For a nonnegative $K^\ast$-map $A
\,:\, (\RR^f )^\ast\rightarrow (\RR^m)^\ast$ and a nonnegative $K$-map
$B: \RR^v \rightarrow \RR^m$, define the sets:
\[
\begin{aligned}
C_A &:= \left\{ x \in \RR^n \, : \, \exists \,y \in K \textup{ s.t. }
{\mathbbm{1}} - H_Q^\ast(x) - A^{\ast}(y) = 0 \right\},\\
C_B &:= \{ V_P(z) \,:\, {\mathbbm 1}^T z = 1, \,\, B(z) \in K \}.
\end{aligned}
\]
The sets $C_A$ and $C_B$ have $K$-lifts since they are obtained by
imposing affine conditions on the cone $K$.  From the definitions
and Remark~\ref{rem:alt Out}, it immediately follows that these containments
hold:
\[
P \subseteq \Out_P(B) \subseteq C_B
\quad \textup{ and } \quad 
C_A \subseteq \Inn_Q(A) \subseteq Q .
\]

As discussed in the introduction, the set $C_A$ could potentially be
empty for an arbitrary choice of $A$. On the other hand, since $P
\subseteq C_B$, $C_B$ is never empty. Thus in general, it is not true
that $C_B$ is contained in $C_A$. However, in the presence of an exact
factorization we get the following chain of containments.


\begin{proposition} \label{prop:containments}
When $S_{P,Q} = A^\ast \circ B$, we get $P \subseteq \Out_P(B)
\subseteq C_B \subseteq C_A \subseteq \Inn_Q(A) \subseteq Q$.
\end{proposition}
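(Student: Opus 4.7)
The plan is to observe that four of the five containments are already in hand, and only the middle one $C_B \subseteq C_A$ genuinely uses the exact factorization hypothesis. First, $P \subseteq \Out_P(B)$ follows immediately from Proposition~\ref{prop:inclusions} applied to the nonnegative $K$-map $B$ (the only point here is that $B$ being part of a $K$-factorization of $S_{P,Q}$ certifies $B(e_j) \in K$ for all $j$, which is the hypothesis of Proposition~\ref{prop:inclusions}). The containment $\Out_P(B) \subseteq C_B$ is immediate from Remark~\ref{rem:alt Out}, which rewrites $\Out_P(B)$ with the equality ${\mathbbm 1}^T z = 1$ and drops the $\kout^v$ constraint to land in $C_B$. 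At the other end, $C_A \subseteq \Inn_Q(A)$ holds because $0 \in \kin^f$, so the defining equation of $C_A$ trivially implies the containment condition for $\Inn_Q(A)$; and $\Inn_Q(A) \subseteq Q$ is again Proposition~\ref{prop:inclusions}.

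The one nontrivial step is to show $C_B \subseteq C_A$ using $S_{P,Q} = A^\ast \circ B$. Given $x \in C_B$, by definition there exists $z \in \RR^v$ with ${\mathbbm 1}^T z = 1$, $B(z) \in K$, and $x = V_P(z)$. Set $y := B(z) \in K$; this will be the witness for membership in $C_A$. It then remains to verify the equation ${\mathbbm 1} - H_Q^\ast(x) - A^\ast(y) = 0$, i.e., ${\mathbbm 1} = H_Q^\ast(V_P(z)) + A^\ast(B(z))$.

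For this computation I would unpack the definition of the slack operator: $S_{P,Q}(z) = ({\mathbbm 1}_{f\times v} - H_Q^\ast \circ V_P)(z) = ({\mathbbm 1}^T z)\,{\mathbbm 1}_f - H_Q^\ast(V_P(z)) = {\mathbbm 1}_f - H_Q^\ast(V_P(z))$, where the last equality uses ${\mathbbm 1}^T z = 1$. On the other hand, the factorization hypothesis gives $S_{P,Q}(z) = (A^\ast \circ B)(z) = A^\ast(y)$. Equating the two expressions yields the desired equation, so $x \in C_A$.

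There is no real obstacle here; the only subtlety to watch is that the $\mathbbm{1}_{f\times v}$ term contributes $\mathbbm{1}_f$ rather than vanishing, which is precisely where the condition ${\mathbbm 1}^T z = 1$ built into $C_B$ (and guaranteed via Remark~\ref{rem:alt Out}) is used. This is also the reason the alternative equality-form definition of $\Out_P(B)$ was recorded earlier: it is what makes the chain close up cleanly.
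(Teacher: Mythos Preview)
Your proof is correct and follows essentially the same approach as the paper: the four outer containments are noted as already established (the paper records them just before the proposition, citing the definitions and Remark~\ref{rem:alt Out}), and the core step $C_B \subseteq C_A$ is proved exactly as you do, by taking $y = B(z)$ and using ${\mathbbm 1}^T z = 1$ to rewrite ${\mathbbm 1}_f - H_Q^\ast(V_P(z))$ as $S_{P,Q}(z) = A^\ast(B(z))$.
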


\begin{proof}
We only need to show that $C_B \subseteq C_A$. Let $V_P(z) \in C_B$, with ${\mathbbm 1}_v^Tz = 1$ and $B(z) \in K$. Then 
choosing $y = B(z)$, we have that ${\mathbbm 1}_f - H_Q^\ast(V_P(z)) - A^\ast(B(z)) 
={\mathbbm 1}_f {\mathbbm 1_v}^T z- H_Q^\ast(V_P(z)) - A^\ast(B(z)) = S_{P,Q}(z) - A^\ast(B(z))
= 0$ since $S_{P,Q} = A^\ast \circ B$.
Therefore, $C_B \subseteq C_A$ proving the result.
 \end{proof}

\begin{example}\label{ex:nested}
To illustrate these inclusions, consider the quadrilateral $P$ with
vertices $(1/2,0), (0,1), (-1/2,0), (0,-1)$, and the quadrilateral $Q$ 
defined by the inequalities $1-x \geq 0$, $1-x/2-y/2 \geq 0$, $1+x \geq 0$ and $1+x/2+y/2 \geq 0$. We have $P
\subseteq Q$, and 
$$S_{P,Q}=
\frac{1}{4}\left(
\begin{array}{cccc}
2 & 4 & 6 & 4 \\
3 & 2 & 5 & 6 \\
6 & 4 & 2 & 4 \\
5 & 6 & 3 & 2 
\end{array}
\right).$$
For $K=\RR^3_+$ we can find an exact factorization for this matrix, such as the one given by
$$ S_{P,Q} = A^T B = \left( \begin{array}{ccc} 0 & 2 & 1 \\ 0 & 1 & \frac{3}{2} \\ 2 & 0 & 1\\ 2 & 1 & \frac{1}{2} \end{array} \right) 
\left( \begin{array}{cccc} \frac{1}{2} &\frac{1}{2} &0 & 0 \\ 0 & \frac{1}{2} & \frac{1}{2} & 0 \\ \frac{1}{2} & 0 & \frac{1}{2} & 1 
\end{array} \right).$$
In this example, $$C_A = C_B = \left \{ (x_1, x_2) \,:\, 1 - x_2 \geq 0, 1 + 2x_1 + x_2 \geq 0, 1 - 2x_1 + x_2 \geq 0 \right \}.$$
By computing $\Inn_Q(A)$ and $\Out_P(B)$ we can see as in Figure~\ref{fig:nestednew} that 
$$ P \subsetneq \Out_P(B) \subsetneq C_B = C_A \subsetneq \Inn_Q(A) \subsetneq Q.$$

\begin{figure}[t]
\centering
\includegraphics[trim= 0mm 20mm 0mm 15mm, clip, height=6cm]{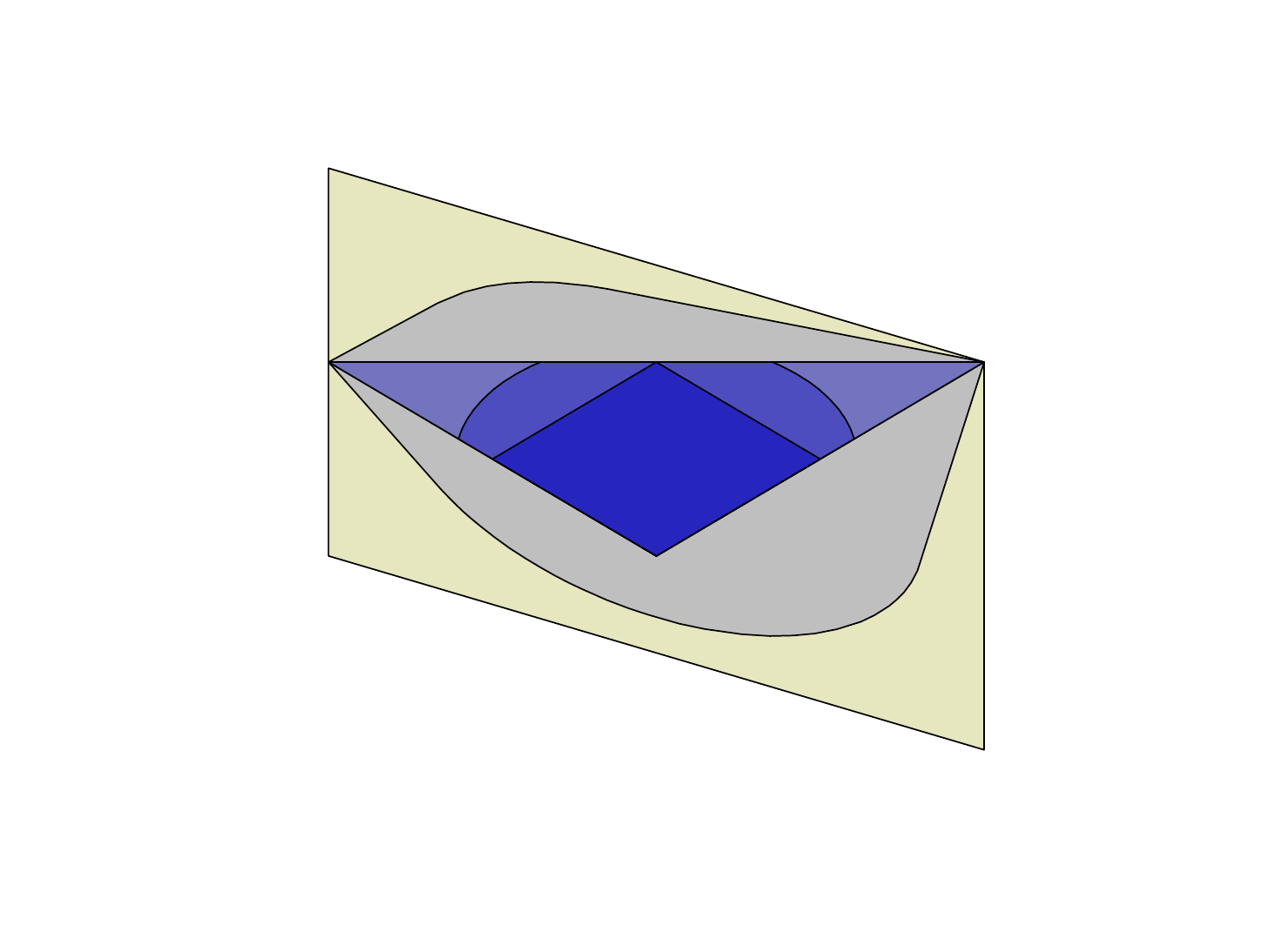}
\caption{$P \subsetneq \Out_P(B) \subsetneq C_B = C_A \subsetneq \Inn_Q(A) \subsetneq Q$.}
\label{fig:nestednew}
\end{figure}

Note that if instead we pick the factorization
$$ S_{P,Q} = A^T B = 
\left( \begin{array}{cccc} 
0 & 2 & 1 & 1\\ 
0 & 1 & \frac{3}{2} & 0\\ 
2 & 0 & 1 & 1\\ 
2 & 1 & \frac{1}{2} & 2
\end{array} \right) 
\left( \begin{array}{cccc} 
\frac{1}{2} &\frac{1}{2} &0 & 0 \\ 
0 & \frac{1}{2} & \frac{1}{2} & 0 \\ 
\frac{1}{2} & 0 & \frac{1}{2} & 1 \\
0 & 0 & 0 & 0
\end{array} \right),$$
then $C_A \subsetneq C_B$ and we get the inclusions in Figure~\ref{fig:nestednew2}.

\begin{figure}[t]
\centering
\includegraphics[trim= 0mm 20mm 0mm 15mm, clip, height=6cm]{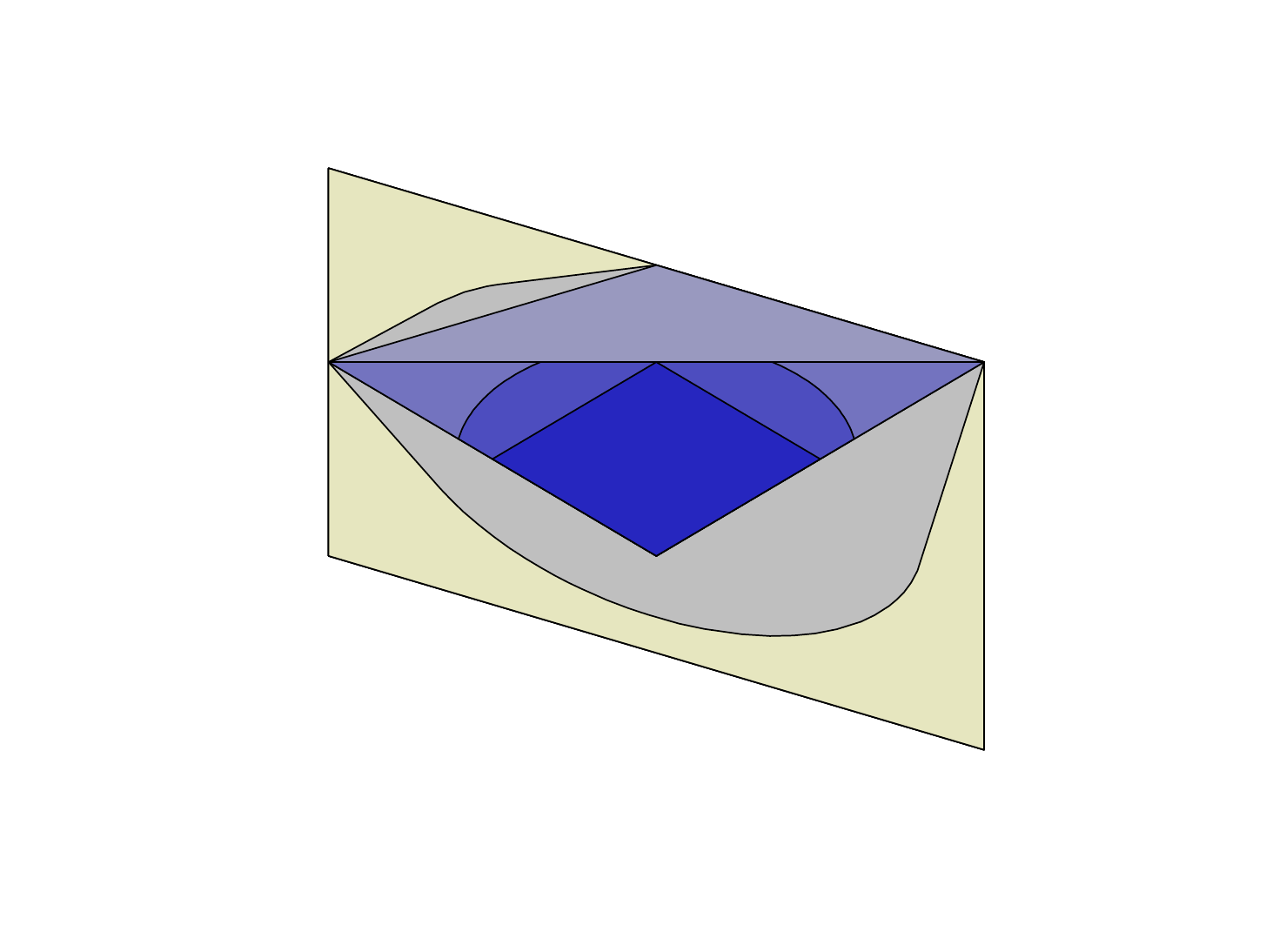}
\caption{$P \subsetneq \Out_P(B) \subsetneq C_B \subsetneq C_A \subsetneq \Inn_Q(A) \subsetneq Q$.}
\label{fig:nestednew2}
\end{figure}
\end{example}

\subsection{Approximate Factorizations of $S_{P,Q}$}
We now consider approximate factorizations of the generalized slack
matrix $S_{P,Q}$ and provide three results about this
situation. First, we generalize Proposition \ref{prop:errorbounds} to
show that the quality of the approximations to $P$ and $Q$ is directly
linked to the factorization error.  Next we show how approximations to
a pair of nested polytopes yield approximate $K$-factorizations of
$S_{P,Q}$. We close with an observation on the recent
inapproximability results in \cite{BraunFioriniPokuttaSteurer} in the
context of this paper.

When we only have an approximate factorization of $S_{P,Q}$, we obtain
a strict generalization of Proposition~\ref{prop:errorbounds}, via the
same proof.

\begin{proposition}\label{prop:errorbounds2}
Let $A:
(\RR^f)^\ast \rightarrow (\RR^m)^\ast$ a nonnegative $K^\ast$-map and 
$B:\RR^v\rightarrow \RR^m$ be a nonnegative $K$-map. Let
$\Delta :=S_{P,Q} - A^\ast \circ B$ be the factorization error.
Then,
\begin{enumerate}
\item $\frac{1}{1+\varepsilon} P \subseteq \Inn_{Q}(A) \subseteq Q$,
  for $\varepsilon = \|\Delta\|_{1,2}$.
\item $P \subseteq \Out_{P}(B) \subseteq (1+\varepsilon) Q$, for
  $\varepsilon =\|\Delta\|_{2,\infty}$.
\end{enumerate}
\end{proposition}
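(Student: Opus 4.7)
\smallskip\noindent\textbf{Proof plan.} My plan follows the template of Proposition~\ref{prop:errorbounds}: establish (1) directly by adapting the argument of Lemma~\ref{lem:cone eigenvalue} to the generalized slack matrix $S_{P,Q}$, and then derive (2) from (1) via polarity using Theorem~\ref{thm:polarity}. The key observation is that replacing $P$ by the nested pair $(P,Q)$ only changes which facet operator appears in the certificate, while the cone-theoretic core of the argument is unaffected.

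For (1), I would fix a vertex $p_i = V_P(e_i)$ of $P$ and try the certificate $y := B(e_i)/(1+\varepsilon) \in K$, with $\varepsilon := \|\Delta\|_{1,2}$. Using $S_{P,Q}(e_i) = {\mathbbm 1} - H_Q^\ast(p_i)$ and $\Delta(e_i) = S_{P,Q}(e_i) - A^\ast(B(e_i))$, a direct computation gives
\[
{\mathbbm 1} - H_Q^\ast\!\left(\tfrac{p_i}{1+\varepsilon}\right) - A^\ast(y) \;=\; \tfrac{1}{1+\varepsilon}\bigl[\,\varepsilon\,{\mathbbm 1} + \Delta(e_i)\,\bigr].
\]
By Remark~\ref{rem:Kinalt} (taking $t = \|\Delta(e_i)\|$), the element $\|\Delta(e_i)\|\,{\mathbbm 1} + \Delta(e_i)$ lies in $\kin^f$, and a short monotonicity check on the $\|t{\mathbbm 1} - x\| \le t$ description shows that inflating the scalar coefficient of ${\mathbbm 1}$ from $\|\Delta(e_i)\|$ up to any larger value $\varepsilon$ preserves membership in $\kin^f$. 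Since $\kin^f$ is a cone, the displayed vector then lies in $\kin^f$, certifying $p_i/(1+\varepsilon) \in \Inn_Q(A)$. Convexity upgrades this to $\tfrac{1}{1+\varepsilon}P \subseteq \Inn_Q(A)$, while the outer inclusion $\Inn_Q(A) \subseteq Q$ is Proposition~\ref{prop:inclusions} applied to $Q$.

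For (2), I would pass to the polar pair $Q^\circ \subseteq P^\circ$. A direct verification shows $S_{Q^\circ, P^\circ} = S_{P,Q}^T$, so the identity $S_{P,Q} = A^\ast \circ B + \Delta$ transposes to $S_{Q^\circ, P^\circ} = B^\ast \circ A + \Delta^\ast$, with the roles of the two factors (and of $K$ and $K^\ast$) interchanged. Applying part (1) to this polar data yields
\[
\tfrac{1}{1+\varepsilon}\, Q^\circ \;\subseteq\; \Inn_{P^\circ}(B), \qquad \varepsilon \;=\; \|\Delta^\ast\|_{1,2} \;=\; \|\Delta\|_{2,\infty},
\]
and taking polars, combined with the identity $\Inn_{P^\circ}(B) = (\Out_P(B))^\circ$ from Theorem~\ref{thm:polarity}, gives $\Out_P(B) \subseteq (1+\varepsilon) Q$; the remaining containment $P \subseteq \Out_P(B)$ is Proposition~\ref{prop:inclusions} once more. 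The only nontrivial obstacle I anticipate is the bookkeeping in this polar step — verifying that $S_{P,Q}^T$ really is the slack matrix of $(Q^\circ, P^\circ)$, that the adjoint of a $K$-factorization of $S_{P,Q}$ is a $K^\ast$-factorization of $S_{Q^\circ, P^\circ}$ with $A$ and $B$ swapped, and that $\|\Delta^\ast\|_{1,2} = \|\Delta\|_{2,\infty}$ — all of which are routine once consistent conventions are adopted.
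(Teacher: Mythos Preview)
Your proposal is correct and matches the paper's approach: the paper simply states that Proposition~\ref{prop:errorbounds2} follows ``via the same proof'' as Proposition~\ref{prop:errorbounds}, which in turn unwinds to exactly the argument you spell out --- the direct $\kin^f$ certificate for (1) (the content of Proposition~\ref{prop:colcone error bound} and Lemma~\ref{lem:cone eigenvalue}, together with $\xi(x)\le\|x\|$) and the polarity reduction via Theorem~\ref{thm:polarity} for (2). The bookkeeping you flag (that $S_{P,Q}^T = S_{Q^\circ,P^\circ}$, the swap of $K$ and $K^\ast$, and $\|\Delta^\ast\|_{1,2}=\|\Delta\|_{2,\infty}$) is indeed routine, with the only mild wrinkle being that $Q$ is allowed to be an unbounded polyhedron, so $Q^\circ=\conv\{0,h_1,\dots,h_f\}$ may fail to have the origin in its interior --- but your vertex-by-vertex argument still places each $h_i/(1+\varepsilon)$ (and $0$) in $\Inn_{P^\circ}(B)$, which suffices.
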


So far we discussed how an approximate factorization of the slack
matrix can be used to yield approximations of a polytope or a pair of
nested polyhedra. It is also possible to go in the opposite direction
in the sense that approximations to a pair of nested polyhedra $P
\subseteq Q$ yield approximate factorizations of the slack matrix
$S_{P,Q}$ as we now show.

\begin{proposition} \label{prop:approximation to factorization}
Let $P \subseteq Q$ be a pair of polyhedra as before and $S_{P,Q}$ be
its slack matrix.  Suppose there exists a convex set $C$ with a
$K$-lift such that $\alpha P \subseteq C \subseteq \beta Q$ for some
$0 < \alpha \leq 1$ and $\beta \geq 1$. Then there exists a
$K$-factorizable nonnegative matrix $S'$ such that $|
(S'-S_{P,Q})_{ij} |  \leq \frac{\beta}{\alpha} - 1$.
\end{proposition}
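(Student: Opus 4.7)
The plan is to construct $S'$ directly from a rescaled version of the nested-polytope slack matrix guaranteed by the generalized Yannakakis theorem cited just before this proposition. Concretely, since $\alpha P \subseteq C \subseteq \beta Q$ and $C$ has a $K$-lift, the generalized slack matrix $\tilde{S} := S_{\alpha P,\, \beta Q}$ admits a $K$-factorization. Here the facet inequalities of $\beta Q$ written in the form $\langle h_i/\beta,\, x\rangle \leq 1$ have normals $h_i/\beta$, and the vertices of $\alpha P$ are $\alpha p_j$, so
\[
\tilde{S}_{ij} \;=\; 1 - \langle h_i/\beta,\, \alpha p_j\rangle \;=\; 1 - \tfrac{\alpha}{\beta}\langle h_i, p_j\rangle.
\]

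Next, I would set $S' := \frac{\beta}{\alpha}\tilde{S}$. Because $\tilde{S} = A^\ast \circ B$ for some nonnegative $K^\ast$-map $A$ and nonnegative $K$-map $B$, the scaling $S' = (\tfrac{\beta}{\alpha}A)^\ast \circ B$ is again a valid $K$-factorization, so $S'$ is $K$-factorizable. Since $\tilde S$ is the slack matrix of the nested pair $\alpha P \subseteq \beta Q$ it is entrywise nonnegative, and multiplying by the positive scalar $\beta/\alpha$ preserves this, so $S'$ is nonnegative.

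Finally, the entrywise comparison is immediate:
\[
S'_{ij} - (S_{P,Q})_{ij} \;=\; \tfrac{\beta}{\alpha} - \langle h_i, p_j\rangle - \bigl(1 - \langle h_i, p_j\rangle\bigr) \;=\; \tfrac{\beta}{\alpha} - 1,
\]
which is nonnegative since $\beta \geq 1 \geq \alpha > 0$. Thus $|(S' - S_{P,Q})_{ij}| = \tfrac{\beta}{\alpha} - 1$ for every $i,j$, giving the required bound.

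There is essentially no obstacle beyond invoking the generalized Yannakakis theorem correctly; the only subtlety worth flagging is the technical condition (mentioned in the remark preceding this subsection) that for a general cone $K$ one needs the $K$-lift of $C$ to meet the interior of $K$. This is automatic for the cones of interest ($\RR^m_+$ and $\PSD^m$), and the paper already tacitly ignores this subtlety, so no adjustment to $\alpha,\beta$ is required. The rest of the proof is pure bookkeeping with the scaling $\beta/\alpha$.
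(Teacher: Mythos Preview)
Your proof is correct and essentially the same as the paper's: both construct the identical matrix $S'_{ij} = \tfrac{\beta}{\alpha} - \langle h_i, p_j\rangle$, you by invoking the generalized Yannakakis theorem as a black box on the pair $(\alpha P,\beta Q)$ and then scaling by $\beta/\alpha$, the paper by directly unpacking the slack-operator characterization of the $K$-lift of $C$ at the points $\alpha p_j$ and the valid inequalities $(1+\eta) - \tfrac{1}{\alpha}\langle h_i,\cdot\rangle \geq 0$. Your packaging is arguably tidier, but there is no substantive difference in the argument.
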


\begin{proof}
Assume that the facet inequalities of $Q$ are of the form $\langle h_j, x
\rangle \leq 1$.  Since $C$ has a $K$-lift, it follows from
\cite{GPT2012} that we can assign an element $b_x \in K$ to each point
$x$ in $C$, and an element $a_y \in K^\ast$ to every valid inequality
$y_0-\left<y,x\right> \geq 0$ for $C$, such that $\left<a_y,b_x\right>
= y_0-\left<y,x\right>$.

If $\alpha = \beta = 1$, then $P \subseteq C \subseteq Q$, and since
$C$ has a $K$-lift, $S_{P,Q}$ has a $K$-factorization and $S' =
S_{P,Q}$ gives the result. So we may assume that $\frac{\beta}{\alpha}
>1$ and define $\eta > 0$ such that $(1 + \eta) \alpha = \beta$.  If
$1-\left<h,x\right> \geq 0$ defines a facet of $Q$, then
$1+\eta-\frac{1}{\alpha}\left<h,x\right> \geq 0$ is a valid inequality for 
$(1+\eta) \alpha Q = \beta Q$, and is therefore a valid inequality for
$C$. Hence, we can pick $a_1, \ldots, a_f$ in $K^*$, one for each
such inequality  as mentioned above. 
Similarly, if $v$ is a
vertex of $P$, then $\alpha v$ belongs to $C$ and we can pick
$b_1, \ldots, b_v$ in $K$, one for each $\alpha v$. Then,
$$\left<a_i,b_j\right> = 1+\eta-\frac{1}{\alpha}\left<h_i,\alpha
v_j\right> = \eta + (S_{P,Q})_{ij}.$$ The matrix $S'$ defined by
$S'_{ij} :=\left<a_i,b_j\right>$ yields the result.
\end{proof}

Note that Proposition~\ref{prop:approximation to factorization} is not
a true converse of Proposition
\ref{prop:errorbounds2}. Proposition~\ref{prop:errorbounds2} says that
approximate $K$-factorizations of the slack matrix give approximate
$K\times \textup{SOC}$-lifts of the polytope or pair of polytopes,
while Proposition~\ref{prop:approximation to factorization} says that
an approximation of a polytope with a $K$-lift gives an approximate
$K$-factorization of its slack matrix. We have not ruled out the
existence of a polytope with no good $K$-liftable approximations but
whose slack matrix has a good approximate $K$-factorization.

A recent result on the inapproximability of a polytope by
polytopes with small lifts, come from the max clique problem, as seen
in \cite{BraunFioriniPokuttaSteurer} (and strengthened in
\cite{BravermanMoitra}).  In \cite{BraunFioriniPokuttaSteurer}, the
authors prove that for $P(n)=\textup{COR}(n)= \textup{conv}\{bb^T\ | \ b \in
\{0,1\}^n\}$ and
$$Q(n)=\{x \in \RR^{n \times n} \ | \ \left<2 \textup{diag}(a)-aa^T,x
\right> \leq 1, a \in \{0,1\}^n\},$$ and for any $\rho > 1$, if $P(n)
\subseteq C(n) \subseteq \rho Q(n)$ then any $\RR^m_+$-lift of $C(n)$ has 
$m = 2^{\Omega(n)}$. Therefore, one cannot
approximate $P(n)$ within a factor of $\rho$ by any polytope with a
small linear lift. This in turn says that the max
clique problem on a graph with $n$ vertices cannot be approximated
well by polytopes with small polyhedral lifts.  In fact they also
prove that even if $\rho=O(n^{\beta})$, for some $\beta \leq 1/2$, the size of $m$ 
grows exponentially.

The above result was proven in \cite{BraunFioriniPokuttaSteurer} by
showing that the {\em nonnegative rank} of the slack matrix $S_{P(n), \rho
  Q(n)}$, denoted as $\rank_+(S_{P(n), \rho
  Q(n)})$, has order $2^{\Omega(n)}$. The matrix $S_{P(n), \rho Q(n)}$
is a very particular perturbation of $S_{P(n),Q(n)}$. It is not hard to see
that the proof of Theorem 5 in \cite{BraunFioriniPokuttaSteurer} in
fact shows that all nonnegative matrices in a small neighborhood of
$S_{P(n),Q(n)}$ have high nonnegative rank.

 \begin{proposition}\label{prop:inapprox}
Let $P(n)$ and $Q(n)$ be as above, and $0<\eta<1/2$. For any
nonnegative matrix $S'(n)$ such that $\|S_{P(n),Q(n)}-S'(n)\|_{\infty}
\leq \eta$, $\rank_+(S'(n))=2^{\Omega(n)}$.
\end{proposition}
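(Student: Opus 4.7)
The strategy is to revisit the proof of Theorem~5 in \cite{BraunFioriniPokuttaSteurer} and verify that it is robust to $\ell_\infty$-perturbations of magnitude less than $1/2$. I would begin by extracting a sharp dichotomy in the entries of $S_{P(n),Q(n)}$: for $a,b \in \{0,1\}^n$, a direct computation gives $(S_{P(n),Q(n)})_{a,b} = 1 - \langle 2\,\textup{diag}(a) - aa^T, bb^T \rangle = (1-a^T b)^2$, so every entry is either exactly $0$ (when $a^T b = 1$) or at least $1$ (when $a^T b \in \{0\} \cup \{2,3,\ldots\}$). Under the perturbation hypothesis $\|S_{P(n),Q(n)} - S'(n)\|_\infty \leq \eta < 1/2$, this dichotomy carries over to $S'(n)$: entries indexed by pairs with $a^T b = 1$ are at most $\eta < 1/2$, while all other entries are at least $1 - \eta > 1/2$.

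Next, I would recall that the BFPS lower bound $\rank_+(S_{P(n),Q(n)}) = 2^{\Omega(n)}$ is obtained through a rectangle covering / fooling set argument tied to the unique disjointness structure: there is a combinatorial fooling set $F \subseteq \{0,1\}^n \times \{0,1\}^n$ of size $2^{\Omega(n)}$ consisting of pairs on which $S_{P(n),Q(n)} \geq 1$, such that for any two distinct fooling pairs $(a_i,b_i),(a_j,b_j) \in F$ at least one of the crossed entries $(a_i,b_j)$, $(a_j,b_i)$ is a zero of $S_{P(n),Q(n)}$. Given any nonnegative decomposition $S'(n) = \sum_{k=1}^r u_k v_k^T$, the rank-one multiplicativity identity
\[
(u_k v_k^T)_{a_i,b_i}\,(u_k v_k^T)_{a_j,b_j} = (u_k v_k^T)_{a_i,b_j}\,(u_k v_k^T)_{a_j,b_i}
\]
shows that whenever some $u_k v_k^T$ takes value at least $\alpha$ on two fooling pairs, one of the crossed entries must carry $u_k v_k^T$-value at least $\alpha$ as well; but on former zeros of $S_{P(n),Q(n)}$ no rank-one factor can exceed $S'(n)_{a,b} \leq \eta$. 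The plan is to combine this constraint with the lower bound $S'(n)_{a_i,b_i} \geq 1 - \eta$ at each fooling pair, via a fractional covering argument, to force $r = 2^{\Omega(n)}$.

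The main obstacle is exactly at this last step: a naive injective-assignment version of the argument (send each fooling pair to the single index $k$ that maximizes $(u_k v_k^T)_{a_i,b_i}$) yields only the useless estimate $r \geq (1-\eta)/\eta$, which is $O(1)$ for fixed $\eta$. To recover an exponential bound one must instead work with a distributional / corruption version of the rectangle covering bound, which is implicit in the argument of \cite{BraunFioriniPokuttaSteurer}: distribute the mass $S'(n)_{a_i,b_i}$ across the $r$ rank-one factors, integrate the rank-one identity against an appropriate measure on $F$, and exploit the strict gap $1 - \eta > 1/2 > \eta$ that separates ``large'' from ``small'' entries. Once this calibration is carried out, it becomes clear that BFPS's original proof nowhere uses more than this dichotomy, so their full bookkeeping applies verbatim with $0$ replaced by ``at most $\eta$'' and $\geq 1$ replaced by ``at least $1-\eta$'', and yields $\rank_+(S'(n)) = 2^{\Omega(n)}$ as claimed.
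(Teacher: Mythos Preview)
The paper does not supply a standalone proof of this proposition; it simply remarks, just before the statement, that ``the proof of Theorem~5 in \cite{BraunFioriniPokuttaSteurer} in fact shows'' the result, the point being that the BFPS argument only ever uses the dichotomy between zero entries and entries at least one, which survives any $\ell_\infty$-perturbation of magnitude below $1/2$. Your proposal takes exactly this route and adds the useful explicit computation $(S_{P(n),Q(n)})_{a,b}=(1-a^Tb)^2$; the fooling-set framing in your middle paragraph is a slight detour---BFPS's actual mechanism is Razborov's corruption bound for unique disjointness, as you yourself identify in the final paragraph---but the core strategy matches the paper's.
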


Since $\|S_{P(n),Q(n)}-S_{P(n),\rho Q(n)}\|_{\infty}=\rho-1$,
Proposition~\ref{prop:inapprox} implies a version of the first part of
the result in \cite{BraunFioriniPokuttaSteurer}, i.e., that the
nonnegative rank of $S_{P(n),\rho Q(n)}$ is exponential in $n$ (if
only for $1<\rho<3/2$). Further, Proposition~\ref{prop:inapprox} also
says that even if we allowed approximations of $P(n)$ in the sense of
Proposition~\ref{prop:errorbounds2} (i,e., approximations with a
$\RR^m_+ \times \SOC$-lift that do not truly nest between $P(n)$ and
$Q(n)$), $m$ would not be small. Therefore, the result in
\cite{BraunFioriniPokuttaSteurer} on the outer inapproximability of
$P(n)$ by small polyhedra is robust in a very strong sense.

\medskip
\textbf{Acknowledgements:} We thank Anirudha Majumdar for the
reference to~\cite{SturmZhang}, and Cynthia Vinzant for helpful
discussions about cone closures.

\nocite{SDOCAG}

\bibliographystyle{plain}
\bibliography{GPTnewbib}

\begin{thebibliography}{10}

\bibitem{AlizadehGoldfarb}
F.~Alizadeh and D.~Goldfarb.
\newblock Second-order cone programming.
\newblock {\em Mathematical programming}, 95(1):3--51, 2003.

\bibitem{SDOCAG}
G.~Blekherman, P.~A. Parrilo, and R.~R. Thomas, editors.
\newblock {\em Semidefinite optimization and convex algebraic geometry},
  volume~13 of {\em MOS-SIAM Series on Optimization}.
\newblock SIAM, 2012.

\bibitem{Borwein-Wolkowicz}
J.~Borwein and H.~Wolkowicz.
\newblock Regularizing the abstract convex program.
\newblock {\em J. Math. Anal. Appl.}, 83(2):495--530, 1981.

\bibitem{BoydVandenberghe}
S.~Boyd and L.~Vandenberghe.
\newblock {\em Convex Optimization}.
\newblock Cambridge University Press, 2004.

\bibitem{BraunFioriniPokuttaSteurer}
G.~Braun, S.~Fiorini, S.~Pokutta, and D.~Steurer.
\newblock Approximation limits of linear programs (beyond hierarchies).
\newblock In {\em Foundations of Computer Science (FOCS), 2012 IEEE 53rd Annual
  Symposium on}, pages 480--489. IEEE, 2012.

\bibitem{BravermanMoitra}
M.~Braverman and A.~Moitra.
\newblock An information complexity approach to extended formulations.
\newblock In {\em Electronic Colloquium on Computational Complexity (ECCC)},
  volume~19, 2012.

\bibitem{GillisGlineur}
N.~Gillis and F.~Glineur.
\newblock On the geometric interpretation of the nonnegative rank.
\newblock {\em Linear Algebra and its Applications}, 437:2685--2712, 2012.

\bibitem{GPT2012}
J.~Gouveia, P.~A. Parrilo, and R.~R. Thomas.
\newblock Lifts of convex sets and cone factorizations.
\newblock {\em Mathematics of Operations Research}, 38(2):248--264, 2013.

\bibitem{GRT2013}
J.~Gouveia, R.~Z. Robinson, and R.~R. Thomas.
\newblock Worst-case results for positive semidefinite rank.
\newblock arXiv:1305.4600, 2013.

\bibitem{Lobo}
M.~Lobo, L.~Vandenberghe, S.~Boyd, and H.~Lebret.
\newblock Applications of second-order cone programming.
\newblock {\em Linear Algebra and its Applications}, 284:193--228, 1998.

\bibitem{NN}
Y.~E. Nesterov and A.~Nemirovski.
\newblock {\em Interior point polynomial methods in convex programming},
  volume~13 of {\em Studies in Applied Mathematics}.
\newblock {SIAM}, {Philadelphia, PA}, 1994.

\bibitem{Pashkovich}
K.~Pashkovich.
\newblock {\em Extended Formulations for Combinatorial Polytopes}.
\newblock PhD thesis, Magdeburg Universit\"at, 2012.

\bibitem{Pataki1}
G.~Pataki.
\newblock On the closedness of the linear image of a closed convex cone.
\newblock {\em Math. Oper. Res.}, 32(2):395--412, 2007.

\bibitem{Pataki2}
G.~Pataki.
\newblock On the connection of facially exposed and nice cones.
\newblock {\em J. Math. Anal. Appl.}, 400(1):211--221, 2013.

\bibitem{hypRenegar}
J.~Renegar.
\newblock Hyperbolic programs, and their derivative relaxations.
\newblock {\em Found. Comput. Math.}, 6(1):59--79, 2006.

\bibitem{Rockafellar}
R.~T. Rockafellar.
\newblock {\em Convex analysis}.
\newblock Princeton Mathematical Series, No. 28. Princeton University Press,
  Princeton, N.J., 1970.

\bibitem{SaundersonParrilo}
J.~Saunderson and P.~A. Parrilo.
\newblock Polynomial-sized semidefinite representations of derivative
  relaxations of spectrahedral cones.
\newblock {\em Mathematical Programming, Series A}.
\newblock to appear.

\bibitem{Sonnevend}
Gy. Sonnevend.
\newblock An ``analytical centre'' for polyhedrons and new classes of global
  algorithms for linear (smooth, convex) programming.
\newblock In {\em System modelling and optimization}, pages 866--875. Springer,
  1986.

\bibitem{SturmZhang}
J.~F. Sturm and S.~Zhang.
\newblock An ${O(\sqrt{n L})}$ iteration bound primal-dual cone affine scaling
  algorithm for linear programming.
\newblock {\em Mathematical Programming}, 72(2):177--194, 1996.

\bibitem{Yannakakis}
M.~Yannakakis.
\newblock Expressing combinatorial optimization problems by linear programs.
\newblock {\em J. Comput. System Sci.}, 43(3):441--466, 1991.

\end{thebibliography}

\end{document}